\theoremstyle{theorem}
\renewcommand{\O}{\mathcal O}
\renewcommand{\to}[1][]{\xrightarrow{\ #1\ }}
\begin{document}
\numberwithin{equation}{theorem}
\title[Uniform bounds for strongly $F$-regular surfaces]{Uniform bounds for strongly $F$-regular surfaces}
\author{Paolo Cascini}
\address{Department of Mathematics, Imperial College London, London SW7 2AZ, UK}
\email{{p.cascini@imperial.ac.uk}}

\author{Yoshinori Gongyo}
\address{Graduate School of Mathematical Sciences, the University of Tokyo, 3-8-1 Komaba, Meguro-ku, Tokyo 153-8914, Japan.}
\email{gongyo@ms.u-tokyo.ac.jp}
\address{Department of Mathematics, Imperial College London, 180 Queen's Gate, London SW7 2AZ, UK.}
 \email{y.gongyo@imperial.ac.uk}

\author{Karl Schwede}
\address{Department of Mathematics\\ The University of Utah\\  155 S 1400 E\\ Salt Lake City\\ UT 84112}
\email{schwede@math.utah.edu}

\begin{abstract}
We show that  if $(X,B)$ is  a two dimensional Kawamata log terminal pair defined over an algebraically closed field of characteristic $p$, and $p$ is sufficiently large, depending only on the coefficients of $B$, then $(X,B)$ is also strongly $F$-regular.
\end{abstract}
\subjclass[2010]{14F18, 13A35, 14B05}
\keywords{$F$-regular, $F$-pure, log terminal, log canonical}
\thanks{The first author was partially supported by EPSRC grant P28327}
\thanks{The second author was partially supported by the Grand-in-Aid for Research Activity Start-Up $\sharp$24840009 from JSPS and Research expense from the JRF fund.  }
\thanks{The third author was partially supported by the NSF grant DMS \#1064485, NSF FRG grant DMS \#1265261, NSF CAREER grant DMS \#1252860 and a Sloan Fellowship.}

\maketitle

\section{Introduction}
It has been well understood for a long time that in the study of birational geometry over the complex field, it is
convenient to work with log pairs with mild singularities,  such as Kawamata log terminal singularities. On the other hand, in the study of birational geometry in positive characteristic,
it is not clear what the right category of singularities is. On one side, Kawamata log terminal singularities are the right singularities because they are preserved by the minimal model program, on the other hand, strongly $F$-regular singularities, which are defined via the Frobenius morphism,  are important because they allow to extend many results known over the complex field to  algebraically closed field of positive characteristic.

The aim of this paper is to study the relationship within these two categories of singularities in dimension two. In a sequence of papers, Hara-Watanabe  \cite{HaraWatanabeFRegFPure}, Hara-Yoshida \cite{HaraYoshidaGeneralizationOfTightClosure},
and Takagi \cite{TakagiInterpretationOfMultiplierIdeals} proved that if $(X, B)$ is a log pair defined over the complex numbers, then $(X, B)$ is Kawamata log terminal if and only if its reduction modulo $p$ is strongly $F$-regular, for any sufficiently large $p$. In addition, any strongly $F$-regular pair is always Kawamata log terminal. On the other hand, even in the case of surfaces, it is possible to show the existence of Kawamata log terminal pairs which are not strongly $F$-regular (e.g. see \autoref{ex_sfr} for a sequence of examples in any characteristic).

Here we prove that if $(X,B)$ is  a two dimensional Kawamata log terminal pair defined over an algebraically closed field of characteristic $p$, and $p$ is sufficiently large, depending on the coefficients of $B$, then $(X,B)$ is also strongly $F$-regular.

More specifically, our main theorem is the following:

\begin{theorem}\label{thm.global to local}Let $I\subseteq (0,1)\cap \mathbb Q$ be a finite subset and let $\Gamma=D(I)$ (cf. \autoref{d_hyperstandard}).

Then there exists a positive constant $p_0$ depending only on $I$ such that if $(X,B:=\sum_{i=1}^\ell q_i D_i)$ is a two dimensional Kawamata log terminal pair defined over an algebraically closed field of characteristic $p>p_0$ and such that the  coefficients of $B$ belong to $\Gamma$, then $(X,B)$ is strongly $F$-regular.
\end{theorem}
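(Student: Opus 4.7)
The plan is to reduce to a bounded family of singularities and then invoke a reduction-mod-$p$ argument. Since strong $F$-regularity is a local property, I may pass to a germ at a closed point $x\in X$. Let $\pi\: Y\to X$ be the minimal resolution and write $\pi^*(K_X+B) = K_Y+B_Y$, where $B_Y = \sum b_j F_j + \widetilde{B}$ consists of exceptional divisors $F_j$ and the strict transform. The KLT hypothesis forces each $b_j<1$, and iterated adjunction along the $F_j$ (using that $\Gamma = D(I)$ is closed under the hyperstandard operation) shows that each coefficient lies in the hyperstandard closure $D^\infty(I)$, a DCC set depending only on $I$.

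Next, I analyze the dual graph $\Delta$ of $\pi$. The classical classification of two-dimensional KLT singularities asserts that either $\Delta$ is a single cyclic chain (in which case $X$ is a toric singularity, hence strongly $F$-regular in every characteristic), or $\Delta$ has a unique vertex of valence $\geq 3$ of dihedral, tetrahedral, octahedral, or icosahedral shape, together with cyclic chains attached at the endpoints. Because $D^\infty(I)$ is DCC and each $b_j<1$, the log discrepancies $1-b_j$ are uniformly bounded below by some $\epsilon(I)>0$. A continued-fraction estimate for the discrepancies along a chain then bounds both the length and the self-intersections of each attached chain, as well as the shape of the central fork. Hence, outside of the purely cyclic case, there are only finitely many possible combinatorial types $\tau$ of weighted dual graph.

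For each non-cyclic type $\tau$, the singularity $(X,x,B)$ is determined up to a bounded family by its combinatorial datum (in dimension two, the formal germ of a rational singularity is essentially recovered from its weighted dual graph), so it admits a flat model over a finitely generated $\mathbb{Z}$-algebra $A_\tau$. The geometric generic fiber is a KLT pair in characteristic zero and, by the Hara--Watanabe--Yoshida--Takagi theorem recalled in the introduction, is strongly $F$-regular after reduction modulo $p$ for all $p$ outside a finite exceptional set $\Sigma_\tau \subset \Spec \mathbb{Z}$. Setting $p_0 := \max_\tau \max \Sigma_\tau$ over the finitely many non-cyclic types yields a uniform bound depending only on $I$, since the cyclic case contributes no restriction at all.

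The main obstacle is the combinatorial boundedness step: one must verify that the classification of KLT dual graphs in positive characteristic produces no exotic types beyond the classical list, and that within each non-cyclic type the local germ is rigid enough that its characteristic-$p$ isomorphism class coincides with the reduction modulo $p$ of a fixed characteristic-zero germ (here one uses that KLT surface singularities are rational, and that rational surface singularities are determined by their weighted dual graphs). A secondary technical point is that the cyclic tails attached to a central fork, though of bounded length once $\epsilon(I)>0$ is fixed, must also be spread out uniformly; and one must verify that the entire pair, not merely each local piece, is strongly $F$-regular. These are the places where the careful work will lie.
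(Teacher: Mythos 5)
Your approach — minimal resolution, classification of weighted dual graphs, boundedness via DCC of discrepancies, and a reduction-mod-$p$ argument over a bounded family — is genuinely different from the paper's. The paper instead extracts a single \emph{Koll\'ar component} $E \cong \mathbb P^1$ over the point (\autoref{kollar's comp}), passes by adjunction to a log Fano pair $(E, B_E)$ with coefficients in $D(D(I)) = D(I) \cup \{1\}$, and then reduces global $F$-regularity of $(\mathbb P^1, B_E)$ via the cone construction (\autoref{prop.cone}) to strong $F$-regularity of a line arrangement in $\mathbb A^2$, which is finally controlled by the explicit Hara--Monsky bound on $F$-pure thresholds (\autoref{prop.BoundOnFPTLineArrangements}). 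The payoff of the paper's route is that the only explicit estimate needed is one-dimensional.

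There is, however, a real gap in your argument: the claim that the purely cyclic case ``contributes no restriction at all'' because $X$ is toric. Strong $F$-regularity of $X$ alone does not give strong $F$-regularity of the pair $(X, B)$ when $B$ is nonzero, and $B$ need not be a torus-invariant divisor, so toric theory does not apply to the pair. In fact the cyclic case is where the hard work is concentrated: even when $X = \mathbb A^2$ is smooth (the degenerate cyclic case with empty exceptional locus) and $B$ is a line arrangement with coefficients in $D(I)$, the pair $(X,B)$ can be KLT but not strongly $F$-regular in small characteristic — this is precisely \autoref{ex_sfr}, and all of \autoref{s_uniform} in the paper is devoted to bounding the characteristic so that this does not happen. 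Any correct proof must confront the line arrangement problem head-on; in your scheme, this content would have to reappear in the treatment of cyclic quotient singularities with boundary, which you have not supplied.

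A secondary concern is the bounded-family step for the non-cyclic types. Even with a fixed weighted dual graph, the pair $(X,x,B)$ carries continuous moduli coming from the positions of the components of $B$ relative to the exceptional curves, so the germ is not determined by the dual graph once $B$ is present. Moreover, the pairs you start with live in characteristic $p$; the reduction-mod-$p$ theorem of Hara--Watanabe--Yoshida--Takagi applies to a fixed characteristic-zero pair and produces a threshold depending on that pair. You would need to show that every characteristic-$p$ germ of a given combinatorial type arises as the reduction of a member of a \emph{single} family over a finitely generated $\mathbb Z$-algebra, and that strong $F$-regularity is open on the base of such a family with complement missing only finitely many primes. This is plausible but is exactly where the real proof would have to live, and as written it is asserted rather than proved. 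The paper avoids this difficulty entirely by the explicit $F$-pure threshold estimate.
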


Note that in the absolute case, i.e. assuming that $B=0$, then the result follows from  \cite{HaraDimensionTwo}, with $p_0=5$.  For an arbitrary finite set $I$, it is possible to find $p_0$ effectively, using the bound of \autoref{cor. Effective A2 finite}, although this bound is not sharp.

\medskip

Thanks to the work of Hacon and Xu \cite{HaconXuThreeDimensionalMMP}, we are able to show the existence of dlt flips, if the characteristic of the underlying field is sufficiently large:

\begin{theorem}\label{t_flips}
Let $I\subseteq (0,1]$ be a finite set. Then there exists a prime $p_I$, depending only on $I$, such that if  $(X,B)$ is a three dimensional  dlt pair over an algebraically closed field of characteristic $p>p_I$,  such that the coefficients of $B$ belong to $I$, and $f\colon X\to Y$ is a $(X,B)$-flipping contraction, then the flips exists.
\end{theorem}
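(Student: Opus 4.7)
My plan is to reduce the problem to constructing pl-flips and then invoke Theorem~\ref{thm.global to local} to supply the strong $F$-regularity hypothesis required by Hacon--Xu. First, following \cite{HaconXuThreeDimensionalMMP}, I would use special termination of flips in dimension three --- which relies only on the two-dimensional MMP, valid in arbitrary characteristic --- to reduce the existence of a general $(X,B)$-flip to the case of pl-flips: flipping contractions $f \colon X \to Y$ for which $(X, S+B')$ is plt with $S = \lfloor B \rfloor$ an irreducible prime divisor, $-S$ is $f$-ample, and $-(K_X+S+B')$ is $f$-ample. The coefficients of $B'$ then lie in the finite subset $I \cap (0,1) \subseteq (0,1) \cap \mathbb{Q}$.

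Second, by adjunction I obtain a klt surface pair $(S, B_S)$ with $B_S := \mathrm{Diff}_S(B')$. The key observation is that each coefficient of $B_S$ has the hyperstandard form
\[
\frac{m-1}{m} + \frac{1}{m} \sum_{i} r_i b_i,
\]
with $m \in \mathbb{Z}_{>0}$, $r_i \in \mathbb{Z}_{\geq 0}$, and $b_i \in I \cap (0,1)$. Since this expression must lie in $[0,1)$, one has $\sum_i r_i b_i < 1$; as $I \cap (0,1)$ is finite and bounded below by a positive constant, only finitely many tuples $(m, \{r_i\})$ are admissible. Thus the coefficients of $B_S$ lie in a finite set $I' \subseteq (0,1) \cap \mathbb{Q}$ depending only on $I$. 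I then apply Theorem~\ref{thm.global to local} to $(S, B_S)$ with this set $I'$, obtaining a constant $p_0 = p_0(I')$ such that for $p > p_0$ the pair $(S, B_S)$ is strongly $F$-regular.

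Finally, I invoke the pl-flip theorem of \cite{HaconXuThreeDimensionalMMP}: under the pl-flipping configuration together with strong $F$-regularity of $(S, \mathrm{Diff}_S(B'))$, the relative log canonical algebra $\bigoplus_{n \geq 0} f_*\mathcal{O}_X(\lfloor n(K_X + S + B')\rfloor)$ is finitely generated, yielding the pl-flip and hence, after the first step, the general dlt flip. Setting $p_I := p_0(I')$, enlarged if necessary to exceed any absolute bound in \cite{HaconXuThreeDimensionalMMP}, completes the argument. The main obstacle will be the finiteness claim in the second paragraph: I must carefully verify that the admissible hyperstandard coefficients arising from the different form a finite set depending only on $I$. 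This is a standard but essential bookkeeping step, since it is precisely this finiteness that allows Theorem~\ref{thm.global to local} to be applied with a constant depending only on $I$.
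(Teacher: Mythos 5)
Your overall strategy — reduce to pl-flips, pass to the klt surface pair by adjunction, apply a uniform strong $F$-regularity statement, and feed this into the Hacon--Xu machinery — follows the right spirit, but there is a genuine gap in the step where you invoke \autoref{thm.global to local}. That theorem yields \emph{local} strong $F$-regularity of the surface pair, but the input required by the Hacon--Xu flip argument is \emph{global} $F$-regularity over the base. Concretely, in the pl-flipping configuration the surface $S$ maps properly and birationally to $T = f(S) \subset Y$, and what Hacon--Xu actually need in order to run the lifting argument and establish finite generation of the relative log canonical algebra is that $(S, \mathrm{Diff}_S(B'))$ (or, more precisely, the surface pair arising after their relative MMP steps) be globally $F$-regular \emph{over $T$}. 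This is the content of \cite[Theorem 3.1]{HaconXuThreeDimensionalMMP}, and it is strictly stronger than pointwise strong $F$-regularity: global $F$-regularity over $T$ is a relative Frobenius-splitting condition on the whole morphism $S \to T$, not a condition at closed points of $S$. Because of this, the paper does not deduce \autoref{t_flips} from \autoref{thm.global to local}; instead it proves a separate, stronger statement, \autoref{t_globallyFregular}, which is exactly the relative global $F$-regularity assertion with coefficients in $I$ and $p$ large. Establishing that global statement requires the machinery of Shokurov complements (\autoref{t_complement}), the construction of a dlt model of the complement, the explicit coefficient surgery in \autoref{claim_globallyFregular}, and then \autoref{prop.GlobInvOfAdj} to descend global $F$-regularity from a curve $C \cong \mathbb{P}^1$ through the surface to the base; none of this is available from \autoref{thm.global to local} alone.

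A secondary issue: your finiteness claim for the coefficient set $I'$ of $\mathrm{Diff}_S(B')$ is not correct as stated. The coefficients have the form $(m-1)/m + (1/m)\sum_i r_i b_i$ with $\sum_i r_i b_i < 1$, and while the constraint on $\sum r_i b_i$ does restrict the tuples $(r_i)$ to a finite set, it places no bound on $m$; as $m \to \infty$ the coefficients accumulate at $1$, so the admissible set is the infinite set $D(I) \cap [0,1)$. In the local statement this is harmless --- \autoref{thm.global to local} is already formulated for $\Gamma = D(I)$ and one can just cite it directly --- but in the global argument the paper needs genuinely finite control, which it obtains by bounding the Cartier index of the complement $N(K_S + B^c) \sim 0$ via \autoref{t_complement}; this forces the relevant coefficients into the truly finite set $\{p/q : 2 \le q \le N,\ 1 \le p \le q-1\}$. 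This is another reason the complement machinery is not optional.
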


Recently Birkar \cite{BirkarMMP} has proven a stronger version of the result above, by showing that dlt flips always exists over any algebraically closed field of characteristic $p_I>5$, by using different methods than ours.

\medskip

The paper is organised as follows: In  \autoref{s_preliminary}, we introduce the tools used in the rest of the paper. In \autoref{s_uniform}, we prove  \autoref{thm.global to local} in the special case $X=\mathbb A^2$ and where $B$ is a line arrangement. By a standard cone construction, this allows us to show that if  $(\mathbb P^1,B)$ is a log Fano pair defined over a field of characteristic $p$ and $p$ is sufficiently large, depending on the coefficients of $B$, then $(\mathbb P^1,B)$ is globally $F$-regular. Thus, by using a global to local method, this leads to a proof of \autoref{thm.global to local} in the general case, as in \autoref{s_global}. Finally, in \autoref{s_flips}, we prove \autoref{t_flips}.
\medskip

\noindent \textbf{Acknowledgement.} We would like to thank H. Tanaka and  Y. Prokhorov for many useful discussions.
 We would also like to thank the referee for carefully reading the paper and for many helful suggestions.

\section{Preliminary results}\label{s_preliminary}

 We work over an algebraically closed field $k$ of positive characteristic $p$, unless otherwise stated.

We refer to \cite{KollarMori} for the classical definitions of singularities appearing in the minimal model program (e.g. Kawamata log terminal pairs), except for the fact that in our definitions we require the pairs to have effective boundaries.
Given a log pair $(X,\Delta)$ and a geometric valuation $E$ over $X$, we denote by $a(E,X,\Delta)$ the {\em log discrepancy} of $(X,\Delta)$ with respect to $E$.
We say that a pair $(X,\Delta)$ is {\em log Fano} if it is Kawamata log terminal and $-(K_X+\Delta)$ is ample.

We refer to \cite{SchwedeSmithLogFanoVsGloballyFRegular} for the classical definition of singularities in positive characteristic (see also \autoref{d_fpure} and \autoref{d_global}).

Given a subset $I\subseteq [0,1]$, we will say that $I$ is {\em ACC} (respectively {\em DCC}) if it satisfies the ascending chain condition
(respectively the descending chain condition).

\begin{definition}[Hyperstandard set]\label{d_hyperstandard}
Let $I\subseteq [0,1]$ be a subset. We define:
\[
I_+=\{ \sum_{j=1}^m a_j i_j\mid i_j\in I, a_j\in \mathbb N \text{ for } j=1,\dots,m\}\cap [0,1],
\]
and
\[
D(I)=\{ \frac{m-1+f}{m}\mid m\in \mathbb N, f\in I_+\}\cap [0,1].
\]
\end{definition}

The following results are well known:
\begin{lemma}\cite[Lemma 4.4]{McKernanProkhorovThreefoldThresholds}\label{l_di}
Let $I\subseteq [0,1]$ be a subset. Then
\[
D(D(I))=D(I) \cup \{1 \}
\]
and $I$ is DCC if and only if $D(I)$ is DCC.
\end{lemma}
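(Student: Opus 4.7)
The plan is to establish $D(D(I))=D(I)\cup\{1\}$ as two inclusions, and then derive the DCC equivalence. First I would check the easy inclusion $D(I)\cup\{1\}\subseteq D(D(I))$: taking $m=1$ in the definition of $D$ shows $I_+\subseteq D(I)$, so a fortiori $I_+\subseteq D(I)_+$, and any $d=(m-1+f)/m\in D(I)$ with $f\in I_+$ is exhibited in $D(D(I))$ by the very same formula; moreover $1/2\in D(I)$ (take $m=2$, $f=0$), so $1=2\cdot(1/2)\in D(I)_+$ yields $1=(1-1+1)/1\in D(D(I))$.

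For the reverse inclusion $D(D(I))\subseteq D(I)\cup\{1\}$, I would take $x=(n-1+g)/n$ with $g=\sum_{k} b_k d_k\in D(I)_+$ and $d_k=(m_k-1+f_k)/m_k$, and rewrite
\[
n(1-x)=1-g=(1-B)+\sum_{k} b_k\,\frac{1-f_k}{m_k},\qquad B:=\sum_{k} b_k,
\]
using the identity $1-d_k=(1-f_k)/m_k$. After clearing denominators by $M:=\prod_k m_k$, the cases $B=0$ and $B=1$ collapse immediately to $x=(n-1)/n$ and $x=(nm_k-1+f_k)/(nm_k)$ respectively (both visibly in $D(I)$), while $g=1$ gives $x=1$. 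For $B\ge 2$, the constraint $g\le 1$ forces $\sum_k b_k M_k(1-f_k)\ge M(B-1)$ with $M_k:=M/m_k$, and after choosing an appropriate divisor $N$ of $nM$ one rewrites $x=(N-1+F)/N$, where $F$ is obtained from the non-negative integer combination $\sum_k b_k M_k f_k$ (possibly shifted by an integer coming from the $(1-B)$ contribution) and thus lies in $I_+$.

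For the DCC equivalence, one direction is trivial from $I\subseteq D(I)$. For the converse, assume $I$ is DCC. I would first prove $I_+$ is DCC: the set $I\cap(0,1]$ admits a positive infimum $\mu>0$ (otherwise an infinite strictly decreasing sequence would live in $I$), so every element of $I_+$ is a sum of at most $\lfloor 1/\mu\rfloor$ elements of $I$; a Ramsey-style argument over such bounded-length decompositions (pass to a subsequence with a fixed number of summands, then successively refine so that each coordinate becomes weakly increasing, invoking DCC of $I$) rules out any strictly decreasing sequence in $I_+$. Given this, $D(I)$ is DCC: a putative strictly decreasing sequence $d_k=(m_k-1+f_k)/m_k$ either admits a subsequence with $m_k=m$ constant (so $f_k$ becomes strictly decreasing in $I_+$, contradiction) or an unbounded subsequence of $m_k$'s (so $d_k\to 1$ along it, incompatible with a strictly decreasing sequence bounded above by $1$).

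The main obstacle is the bookkeeping in the case $B\ge 2$ of the reverse inclusion: singling out the correct divisor $N$ of $nM$ and checking that the residual term $F$ lands in $[0,1]\cap I_+$ requires using that the integer shift $(1-B)M$ is exactly absorbed by the integer part of $\sum_k b_k M_k(1-f_k)$, so that the ``fractional piece'' becomes a non-negative integer combination of elements of $I$.
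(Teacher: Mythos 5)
The paper itself does not prove this lemma; it cites it directly from McKernan--Prokhorov, so there is no in-paper argument to compare against. Your overall plan (two inclusions, then the DCC equivalence) is the standard one, and both the easy inclusion and the whole DCC argument are essentially complete and correct: the positive infimum $\mu$ of $I\cap(0,1]$, the bound $\lfloor 1/\mu\rfloor$ on the number of nonzero summands, the diagonal subsequence argument forcing a weakly increasing subsequence of each coordinate, and the dichotomy ``$m_k$ constant along a subsequence versus $m_k$ unbounded'' for $D(I)$ all go through.

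The genuine gap is in the subcase $B\geq 2$ of the reverse inclusion. You assert that the integer shift $(1-B)M$ is ``exactly absorbed by the integer part of $\sum_k b_k M_k(1-f_k)$,'' but as written this does not parse: the lemma allows $I$ to consist of irrationals, so $\sum_k b_k M_k(1-f_k)$ has no meaningful integer part, and there is no visible reason why the shift should vanish for a generic choice of divisor $N\mid nM$. What actually closes the bookkeeping is a structural observation your sketch never invokes: whenever $m_k\geq 2$ one has $d_k=(m_k-1+f_k)/m_k\geq 1/2$, so the constraint $g=\sum_k b_k d_k<1$ forces $\sum_{m_k\geq 2}b_k\leq 1$; hence at most one index $k_0$ can have $m_{k_0}\geq 2$, and for that index $b_{k_0}=1$. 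With this in hand one takes $M=m_{k_0}$ (or $M=1$ if no such index exists) and $N=nM$, and a short computation gives $1+M(B-1)-\sum_k b_k M_k=0$, so the shift vanishes identically, $F=\sum_k b_k M_k f_k$ is a nonnegative integer combination of elements of $I$, and $F\leq 1$ is equivalent to $x\leq 1$. (The leftover case $g=1$ gives $x=1$, which is why the $\{1\}$ must be adjoined.) Without this observation the choice of ``appropriate divisor $N$'' and the claimed absorption are unjustified, and the proposal would not compile into a proof.
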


\begin{lemma}\cite[Lemma 4.3]{McKernanProkhorovThreefoldThresholds}\label{l_adjunction}
Let $(X,\Delta)$ be a log canonical pair such that the components of $\Delta$ belong to a subset $I\subseteq [0,1]$, and let $S$ be an irreducible component of $\lfloor \Delta \rfloor$.
Let $\Theta$ be the divisor on $S$ defined by adjunction:
\[
(K_X+\Delta)|_S=K_S+\Theta.
\]
Then,  the coefficients of $\Theta$ belong to $D(I)$.
\end{lemma}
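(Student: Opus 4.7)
The plan is to deduce the statement directly from the explicit codimension-one description of the different. Write $\Delta = S + \Delta'$, where $\Delta' = \Delta - S$ is effective and its prime components $D_j$ have coefficients $a_j \in I$ by hypothesis (after discarding components with zero coefficient). By standard adjunction for log canonical pairs on a normal surface/variety, there is a well-defined effective $\mathbb{Q}$-divisor $\Theta = \mathrm{Diff}_S(\Delta')$ on $S$ such that $(K_X+\Delta)|_S = K_S + \Theta$, so it suffices to analyze $\Theta$ component by component.

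Next, I would invoke the classical formula for the different in codimension one, as found, for example, in Shokurov's work on three-dimensional log flips or in Koll\'ar--Mori (see also Koll\'ar's \emph{Singularities of Pairs}): for any prime divisor $P \subset S$, the coefficient of $P$ in $\Theta$ has the shape
\[
\theta_P \;=\; \frac{m-1}{m} \;+\; \frac{1}{m}\sum_{j} a_j\, n_{j,P},
\]
where $m$ is the (positive integer) index of $K_X+S$ at the generic point of $P$, and the $n_{j,P}$ are non-negative integers measuring how the $D_j$ meet $S$ at $P$ (roughly, intersection multiplicities of the pullback of $D_j$ with $S$ on a suitable local cover where $K_X+S$ becomes Cartier).

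Set $f := \sum_j a_j\, n_{j,P}$. Since each $a_j \in I$ and each $n_{j,P}\in \mathbb{Z}_{\ge 0}$, the quantity $f$ is a non-negative integer combination of elements of $I$. The hypothesis that $(X,\Delta)$ is log canonical forces $\theta_P \le 1$, which yields $f \le 1$, hence $f \in I_+$ by the very definition of $I_+$. Therefore
\[
\theta_P \;=\; \frac{m - 1 + f}{m} \;\in\; D(I),
\]
as required. The only genuine input is the explicit form of the different recalled in the second paragraph; once that formula is at hand, the rest is bookkeeping, so I do not anticipate a serious obstacle. The one point requiring mild care is to make sure $S$ is normal in codimension one on $S$ so that the different is well defined on the normalization, but this is automatic in the log canonical setting after passing to the normalization of $S$, which does not affect the coefficient computation.
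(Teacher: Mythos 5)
The paper does not actually prove this lemma---it simply cites \cite[Lemma 4.3]{McKernanProkhorovThreefoldThresholds} and records it as ``well known.'' Your proof is a correct reconstruction of the standard argument behind that citation: the codimension-one formula for the different gives $\theta_P = \tfrac{m-1}{m} + \tfrac1m\sum_j a_j n_{j,P}$, the log canonical hypothesis pushes $\theta_P \le 1$ via adjunction (so $f := \sum_j a_j n_{j,P} \in [0,1]$ is forced to lie in $I_+$), and the definition of $D(I)$ closes the loop. One small point worth making explicit: the bound $\theta_P \le 1$ comes from the (easy) direction of adjunction---$(X,\Delta)$ log canonical implies $(S,\Theta)$ log canonical---not from inversion of adjunction; and the $n_{j,P}$ are the intersection multiplicities on the index-one cover of $K_X+S$ at the generic point of $P$, which is what makes them non-negative integers. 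With those clarifications your argument is complete and matches the intended proof.
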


%

\subsection{Some remarks on $F$-pure thresholds}\label{s_fpure}

Suppose that $D \geq 0$ is a divisor on a normal integral \emph{affine} scheme $X = \Spec R$.  We begin by recalling the definition of sharp $F$-purity and the $F$-pure threshold

\begin{definition}[Sharp $F$-purity and strong $F$-regularity]\label{d_fpure}
For any real number $\lambda \geq 0$, we say that $(X, \lambda D)$ is \emph{sharply $F$-pure} if for some $e > 0$ there exists
\[
\phi \in \Hom_{\O_X}\big(F^e_* (\O_X( \lceil (p^e - 1) \lambda D \rceil)), \O_X)
\]
such that $\phi(F^e_* \O_X) = \O_X$.

We say that $(X, \lambda D)$ is \emph{strongly $F$-regular} if for every effective Weil divisor $E \geq 0$ there exists $e > 0$ and
\[
\phi \in \Hom_{\O_X}\big(F^e_* (\O_X( \lceil (p^e - 1) \lambda D + E \rceil)), \O_X)
\]
such that $\phi(F^e_* \O_X) = \O_X$.
\end{definition}

\begin{remark}
In the above definition of sharp $F$-purity, if a single $e > 0$ yields a $\phi$, then all multiples $ne$ of that $e > 0$ also yield elements
\[
\phi^n \in \Hom_{\O_X}\big(F^{ne}_* (\O_X( \lceil (p^{ne} - 1) \lambda D \rceil)), \O_X)
\]
 such that $\phi^n(F^{ne}_* \O_X) = \O_X$.
\end{remark}

\begin{definition}[$F$-pure threshold]
With $(X, D)$ as above, the \emph{$F$-pure threshold of $(X, D)$}, denoted $\fpt(X, D)$, is defined to be
\[
\sup \big\{ t > 0\; | \; (X, tD) \text{ is sharply $F$-pure} \big\}.
\]
In the case that $X$ is $\bQ$-Gorenstein and $D$ is $\bQ$-Cartier, it can be shown that $\fpt(X, D)$ is a rational number \cite{SchwedeTuckerZhang}.
\end{definition}



\subsection{Globally $F$-regular pairs}\label{ss_global}
We now recall the definition of globally $F$-regular pairs:
\begin{definition}[Global $F$-regularity] \cite{HaconXuThreeDimensionalMMP,SchwedeSmithLogFanoVsGloballyFRegular,SmithGloballyFRegular} \label{d_global}
Let $f\colon X\to Y$ be a proper morphism of normal varieties and let $\Delta\ge 0$ be a $\mathbb Q$-divisor on $X$. Then $(X,\Delta)$ is {\em globally $F$-regular over} $Y$ if for any effective divisor $D$, there exists a positive integer $e$ such that
\[
\mathcal O_X \to \mathcal  O_X(\lceil (p^e-1)\Delta\rceil +D)
\]
splits locally over $Y$.
\end{definition}
\begin{remark}
Using the same notation as \autoref{d_global}, if $Y$ is affine then $(X,\Delta)$ is globally $F$-regular over $Y$ if and only if for any effective divisor $D$, there exists a positive integer $e$ such that the  natural map
\[
H^0(X,\mathcal O_X(\lfloor(1-p^e)(K_X+\Delta)\rfloor -D)\to H^0(X,\mathcal O_X)
\]
is surjective \cite[Proposition 2.10]{HaconXuThreeDimensionalMMP}.
\end{remark}

Let $X$ be a normal projective variety and let $L$ be an ample divisor on $X$. We denote by $R(X,L)
=\bigoplus_{m\in \mathbb Z}H^0(X,\mathcal O_X(mL))$  the section ring of $L$. The corresponding affine cone over $X$  is given by
$$W=\Spec R(X,L).$$
For each effective $\mathbb Q$-divisor $\Delta$ on $X$, we denote by $\Delta_W$ the associated $\mathbb Q$-divisor on $W$ (e.g. see \cite[\S 3.1]{Kollar13} for more details).

\begin{proposition}
\label{prop.cone}
Let $X$ be a normal variety, $\Delta\ge 0$ a $\mathbb Q$-divisor on $X$ and $L$  an ample divisor. Let $W$ be the affine cone over $X$ associated to $L$ and let $\Delta_W$ be the corresponding $\mathbb Q$-divisor.

Then $K_W + \Delta_W$ is $\bQ$-Cartier if and only if $r(K_X + \Delta) \sim_{\mathbb Q} L$ for some rational number $r \in \bQ$.  In this case:

\begin{enumerate}
\item[(1)] $(X,\Delta)$ is log Fano if and only if $(W,\Delta_W)$ is Kawamata log terminal.
\item[(2)] $(X,\Delta)$ is globally $F$-regular if and only if $(W,\Delta_W)$ is strongly $F$-regular (in fact, this holds even without the $\bQ$-Cartier assumption).
\end{enumerate}
\end{proposition}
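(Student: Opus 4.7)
The plan is to exploit the partial resolution $\pi\colon Y\to W$ obtained by blowing up the vertex $v\in W$. This $Y$ is the total space of a line bundle over $X$, with projection $p\colon Y\to X$ and exceptional divisor $E=\pi^{-1}(v)$ the zero section, isomorphic to $X$. Away from $v$, the cone $W$ is a $\bG_m$-torsor over $X$, so $\pi^*\Delta_W=p^*\Delta$ on $Y$ and $(\pi^*\Delta_W)|_E$ is identified with $\Delta$ under $E\cong X$. For the $\bQ$-Cartier criterion, $K_W+\Delta_W$ is automatically $\bQ$-Cartier on $W\setminus\{v\}$ once $K_X+\Delta$ is, so only the behavior at $v$ matters; there, $\bG_m$-invariance together with a graded-piece comparison shows that some multiple of $K_W+\Delta_W$ is Cartier at $v$ if and only if there exist integers $m>0$ and $n$ with $m(K_X+\Delta)\sim nL$, equivalently $r(K_X+\Delta)\sim_{\bQ}L$ for some $r\in\bQ$.

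For (1), assume this condition. A direct computation on $\pi\colon Y\to W$ expresses the discrepancy $a(E,W,\Delta_W)$ as an explicit function of $r$, and this discrepancy is strictly greater than $-1$ precisely when $-(K_X+\Delta)$ is a positive multiple of $L$. Since $\pi^*\Delta_W$ restricts to $\Delta$ on $E\cong X$, inversion of adjunction along $E$ yields that $(W,\Delta_W)$ is Kawamata log terminal if and only if $(X,\Delta)$ is klt and $a(E,W,\Delta_W)>-1$, i.e., if and only if $(X,\Delta)$ is log Fano.

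For (2), the mechanism is the cone correspondence of Smith \cite{SmithGloballyFRegular}, extended to pairs in \cite{SchwedeSmithLogFanoVsGloballyFRegular}. By the affine reformulation of \autoref{d_global}, strong $F$-regularity of $(W,\Delta_W)$ amounts to surjectivity of
\[
H^0\bigl(W,\O_W(\lfloor(1-p^e)(K_W+\Delta_W)\rfloor-D)\bigr)\to H^0(W,\O_W)
\]
for every effective divisor $D$ and some $e>0$. The key step is to show that it suffices to test with $\bG_m$-invariant $D$, which are pulled back from effective divisors $D'$ on $X$. Decomposing sections by the $\bZ$-grading of $R=R(X,L)$, the condition then becomes surjectivity of
\[
H^0\bigl(X,\O_X(\lfloor(1-p^e)(K_X+\Delta)\rfloor-D'+mL)\bigr)\to H^0(X,\O_X(mL))
\]
for all $m$ sufficiently large, which is precisely the defining condition for global $F$-regularity of $(X,\Delta)$. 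Crucially, in this reduction one never rounds a non-graded divisor, so no $\bQ$-Cartier hypothesis on $K_W+\Delta_W$ is needed, accounting for the parenthetical in the statement. The main obstacle is exactly this last step: justifying that graded test divisors suffice and that the graded pieces of $\O_W(\lfloor(1-p^e)(K_W+\Delta_W)\rfloor)$ identify with the expected twists on $X$. Once this is done, the $\bQ$-Cartier criterion and (1) follow from standard discrepancy and $\bG_m$-equivariance arguments.
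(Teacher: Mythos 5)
The paper's own ``proof'' of this proposition is a one-line citation: part (1) to \cite[Lemma 3.1]{Kollar13} and part (2) to \cite[Proposition 5.3]{SchwedeSmithLogFanoVsGloballyFRegular}. Your proposal instead sketches the argument that sits underneath those references. The outline goes in the right direction, but you have left a genuine gap precisely where the work is.

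For the $\bQ$-Cartier criterion and part (1), the blow-up-the-vertex picture, the identification of $E$ with $X$, and the discrepancy computation for $E$ matching $r$ are exactly the standard route; this part is an acceptable sketch. (A minor imprecision: the clean condition is $K_X+\Delta\sim_{\bQ}rL$ for some $r\in\bQ$, which permits $r=0$; your version $m(K_X+\Delta)\sim nL \Leftrightarrow r(K_X+\Delta)\sim_{\bQ}L$ breaks down when $n=0$, though this matches the paper's phrasing and is harmless for the applications since $-(K_X+\Delta)$ is ample there.)

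For part (2), you correctly identify the crux -- reducing to $\bG_m$-invariant test divisors and then reading off the graded pieces -- and then you explicitly decline to prove it, writing ``the main obstacle is exactly this last step.'' That step is not a technicality; it \emph{is} \cite[Proposition 5.3]{SchwedeSmithLogFanoVsGloballyFRegular} (building on Smith's cone argument for global $F$-regularity). To close the gap you would argue as follows. Strong $F$-regularity of an affine $W=\Spec R$ can be tested against a single element $c\in R$ not in any minimal prime such that $R_c$ is regular; since $R$ is $\bZ$-graded and its nonregular locus is a $\bG_m$-stable closed subset, such a $c$ may be chosen homogeneous. For homogeneous $c$ and $\bG_m$-invariant $\Delta_W$, the reflexive sheaf $\O_W(\lfloor(1-p^e)(K_W+\Delta_W)\rfloor)$ is graded with $m$-th piece $H^0(X,\O_X(\lfloor(1-p^e)(K_X+\Delta)\rfloor+mL))$, and the surjectivity you wrote down decomposes degree by degree into exactly the global $F$-regularity condition on $(X,\Delta)$. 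Without this reduction your argument is an outline of the cited result rather than a proof.
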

\begin{proof}
(1) is well known (e.g. see \cite[Lemma 3.1]{Kollar13}).
(2) follows from \cite[Proposition 5.3]{SchwedeSmithLogFanoVsGloballyFRegular}
\end{proof}

\begin{lemma}\label{l_sfr}
Let $(X,\Delta)$ be a sharply  $F$-pure pair and let $C$ be an effective divisor such that $(X,\Delta)$ is strongly $F$-regular outside the support of $C$ and $(X,\Delta+\varepsilon C)$ is sharply $F$-pure for some $\varepsilon >0$. Then $(X,\Delta)$ is strongly $F$-regular.
\end{lemma}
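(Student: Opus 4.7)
The plan is to verify the definition of strong $F$-regularity directly by combining the local data on $X\setminus \Supp(C)$ with the globally available splittings coming from $(X,\Delta+\varepsilon C)$. Since strong $F$-regularity is a local property, I may assume $X=\Spec R$ is affine, and given an arbitrary effective Weil divisor $E\ge 0$ on $X$ the goal is to produce some $e>0$ and an $\O_X$-linear map
\[
\Phi\colon F^e_*\O_X(\lceil (p^e-1)\Delta\rceil + E)\to\O_X
\]
with $\Phi(F^e_*\O_X)=\O_X$.

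The two ingredients are as follows. Sharp $F$-purity of $(X,\Delta+\varepsilon C)$ yields $e_0>0$ and, by iterating Frobenius pushforwards, for every $N\ge 1$ a splitting
\[
\phi_N\colon F^{Ne_0}_*\O_X\bigl(\lceil (p^{Ne_0}-1)(\Delta+\varepsilon C)\rceil\bigr)\to \O_X,
\]
in which the $C$-coefficient of the twist, approximately $(p^{Ne_0}-1)\varepsilon$, can be taken as large as one likes.  Strong $F$-regularity of $(X,\Delta)$ on $X\setminus\Supp(C)$, applied to $E$, yields $e_1>0$ and
\[
\psi\colon F^{e_1}_*\O_X(\lceil (p^{e_1}-1)\Delta\rceil + E)\to \O_X
\]
whose image is an ideal $\mathfrak a\subseteq \O_X$ with $V(\mathfrak a)\subseteq \Supp(C)$; by Noetherianity $\mathfrak a\supseteq \O_X(-mC)$ for some $m>0$.

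The key construction is the standard composition-of-splittings: applying $F^{Ne_0}_*$ to $\psi$ and post-composing with $\phi_N$ (with the usual twist-and-projection-formula bookkeeping), one obtains an $\O_X$-linear map
\[
\Phi\colon F^{Ne_0+e_1}_*\O_X\bigl(D\bigr)\to \O_X,
\]
where $D$ may, after using $(p^{e_1}-1)+p^{e_1}(p^{Ne_0}-1)=p^{Ne_0+e_1}-1$ and tracking the ceilings, be rewritten as $\lceil (p^{Ne_0+e_1}-1)\Delta\rceil + E + kC$ for an integer $k$ of order $p^{e_1}(p^{Ne_0}-1)\varepsilon$.  Choosing $N$ so large that $k\geq p^{e_1}m$, the excess coefficient of $C$ absorbs the $\O_X(-mC)$ shortfall of $\mathfrak a$: the image $F^{Ne_0}_*\mathfrak a\supseteq F^{Ne_0}_*\O_X(-mC)$ lies inside the twisted source of $\phi_N$, and the sharp $F$-purity hypothesis of $(X,\Delta)$ ensures that after this composition the restriction $\Phi|_{F^{e}_*\O_X}$ remains surjective onto $\O_X$. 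Since strong $F$-regularity is stable under shrinking the boundary, this establishes strong $F$-regularity of $(X,\Delta)$.

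The main obstacle is precisely this bookkeeping in the composition step: carefully tracking how the Frobenius pushforward interacts with ceilings and with the divisor twists, and verifying that the image of the composite is all of $\O_X$ (and not merely a proper ideal supported on $\Supp(C)$) once $N$ is taken sufficiently large.  The sharp $F$-purity of $(X,\Delta)$ itself provides the starting splitting that keeps each stage of the iteration surjective on $F^e_*\O_X$ rather than only on the whole twisted source.
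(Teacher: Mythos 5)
The paper itself disposes of this lemma in one line: reduce to the affine case and quote \cite[Corollary 3.10]{SchwedeSmithLogFanoVsGloballyFRegular}. What you have written is essentially the standard proof of that corollary (the Hochster--Huneke ``absorb a test element'' composition argument), so your overall route is correct and amounts to unpacking the citation rather than taking a genuinely different path.

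That said, two points in your sketch need repair before it is a proof. First, the containment you extract ``by Noetherianity'' goes the wrong way: from $V(\mathfrak a)\subseteq \Supp(C)$ the Nullstellensatz gives $\mathfrak a\supseteq I_{\Supp C}^{m}$ for some $m$, and since $I_{\Supp C}^{m}\subseteq \O_X(-mC_{\mathrm{red}})$ this does \emph{not} yield $\mathfrak a\supseteq \O_X(-mC)$. The standard fix is to work with a single element: choose $0\neq c$ with $\divisor(c)\ge C_{\mathrm{red}}$, localize at $c$ (so that $X_c$ lies in the strongly $F$-regular locus), and clear denominators to produce $\psi$ with $c^{n}\in\psi(F^{e_1}_*\O_X)$ directly. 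Second, and more seriously, the final surjectivity $\Phi(F^{e}_*\O_X)=\O_X$ is precisely the absorption statement $\phi_N\big(F^{Ne_0}_*(c^{n}\O_X)\big)=\O_X$, and it is not supplied by the sharp $F$-purity of $(X,\Delta)$ as you assert at the end (that hypothesis is in fact redundant, being implied by sharp $F$-purity of $(X,\Delta+\varepsilon C)$). What proves it is that a splitting $\phi_N$ for $(X,\Delta+\varepsilon C)$ is, locally, a splitting for $(X,\Delta)$ precomposed with multiplication by a section $g$ whose divisor dominates $\lceil (p^{Ne_0}-1)\varepsilon C\rceil$; for $N\gg 0$ this divisor dominates $n\divisor(c)$, so surjectivity of $\phi_N$ on $F^{Ne_0}_*\O_X$ upgrades to surjectivity on $F^{Ne_0}_*(c^{n}\O_X)$. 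You explicitly flag this verification as ``the main obstacle'' without carrying it out, but it is the entire content of the lemma --- everything else is formal bookkeeping --- so as written the proposal records the correct strategy while leaving its crucial step unproved.
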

\begin{proof}
We may assume that $X$ is affine. Thus, the claim follows from \cite[Corollary 3.10]{SchwedeSmithLogFanoVsGloballyFRegular}.
\end{proof}

The following result is a slight generalization of \cite[Proposition 4.3]{HaraDimensionTwo} and \cite[Propsition 3.8]{HaconXuThreeDimensionalMMP}.
\begin{proposition}
\label{prop.GlobInvOfAdj}
Let $f:X \to Y$ be a birational morphism of normal varieties. Let $\Delta$ and $B$ be $\mathbb{Q}$-divisors on $X$ such that  $(X, \Delta)$ is purely log terminal,  $S = \lfloor \Delta \rfloor$ is prime and normal,  $(X,B)$ is Kawamata log terminal and $B+S\le \Delta$. Assume that
\begin{itemize}
 \item[(1)]$-(K_X+\Delta)$ is $f$-ample,
 \item[(2)] $S$ is $f$-exceptional,
 \item[(3)]If $K_S+\Delta_S =(K_X+\Delta)|_{S}$ is defined by adjunction, then $(S,\Delta_S)$ is a globally $F$-regular pair.\end{itemize}
Then $(X, B)$ is  globally $F$-regular  over $Y$.
\end{proposition}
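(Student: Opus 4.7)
The plan is to adapt the lifting strategy of Hacon--Xu: produce a Frobenius splitting on $S$ using hypothesis (3), lift it to $X$ using relative Serre vanishing furnished by hypothesis (1), and then transfer the conclusion from the pair $(X, \Delta)$ to the pair $(X, B)$.

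First I would reduce to the situation where $Y$ is affine (since the conclusion is local over $Y$) and to the case $\Delta = B + S$. The latter reduction is valid because $B \le \Delta - S$ gives a subsheaf inclusion $F^e_* \mathcal{O}_X(\lceil (p^e-1) B \rceil + D) \hookrightarrow F^e_* \mathcal{O}_X(\lceil (p^e-1)(\Delta - S) \rceil + D)$, so a splitting for $(X, \Delta - S)$ restricts to one for $(X, B)$; and $(X, \Delta - S)$ is still Kawamata log terminal, since $\lfloor \Delta - S \rfloor = 0$ in view of $(X, \Delta)$ being purely log terminal with $S = \lfloor \Delta \rfloor$. Fix an effective divisor $D$ on $X$, chosen not to contain $S$ (possible because $S$ is $f$-exceptional).

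Next I set $\mathcal{L}_e := \sHom_{\mathcal{O}_X}\bigl(F^e_* \mathcal{O}_X(\lceil (p^e-1) \Delta \rceil + D),\, \mathcal{O}_X\bigr)$. By Grothendieck duality for the Frobenius, $\mathcal{L}_e$ is $F^e_*$ applied to a reflexive sheaf of the form $\mathcal{O}_X((1-p^e)K_X - \lceil (p^e-1)\Delta\rceil - D)$. The crucial tool is the short exact sequence
\[
0 \to \mathcal{L}_e(-S) \to \mathcal{L}_e \to \mathcal{L}_e|_S \to 0.
\]
Since $(X,\Delta)$ is purely log terminal with $S = \lfloor\Delta\rfloor$ prime and normal, Schwede's $F$-adjunction identifies $\mathcal{L}_e|_S$ with the analogous sheaf on $S$ built from the adjoint pair $(S, \Delta_S)$, namely $\sHom_{\mathcal{O}_S}(F^e_* \mathcal{O}_S(\lceil (p^e-1)\Delta_S\rceil + D|_S),\, \mathcal{O}_S)$, provided $e$ is chosen so that $(p^e - 1)(K_X + \Delta)$ is Cartier and the fractional parts line up. Using (1), $-(p^e-1)(K_X+\Delta)$ is $f$-ample and dominates the perturbation by $D + S$ for $e \gg 0$, so Serre's relative vanishing yields $R^1 f_* \mathcal{L}_e(-S) = 0$ and consequently the surjection $f_* \mathcal{L}_e \twoheadrightarrow f_* \mathcal{L}_e|_S$.

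Now I apply (3): global $F$-regularity of $(S, \Delta_S)$ produces, for $e$ sufficiently large and well chosen, a section $\phi_S \in H^0(S, \mathcal{L}_e|_S)$ with $\phi_S(F^e_* 1) = 1$; lift it to $\phi_X \in H^0(X, \mathcal{L}_e)$. The restriction of $\phi_X$ to the subsheaf $F^e_* \mathcal{O}_X(\lceil (p^e-1) B \rceil + D)$ sends $F^e_* 1$ to $1 + s$ for some $s \in \mathcal{O}_X(-S)$; by (2), $f(S)$ has codimension $\ge 2$ in $Y$ and $s$ vanishes along $f(S)$, so $1+s$ is a unit in $\mathcal{O}_Y$ on an open neighborhood of $f(S)$, and dividing by this unit yields the desired splitting locally over $Y$. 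The principal obstacle will be the correct implementation of $F$-adjunction to identify $\mathcal{L}_e|_S$, which requires careful bookkeeping of the ceilings and uses the purely log terminal hypothesis essentially; the Serre vanishing step and the passage from the $\Delta$-splitting to the $B$-splitting are comparatively routine once the right framework is set up.
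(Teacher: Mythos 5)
Your proof is essentially the paper's own argument: both restrict the (dual of the) relevant pushforward sheaf to $S$ via the short exact sequence, identify the restriction with the $F$-adjoint datum on $(S,\Delta_S)$ (the paper invokes Das's result that the different equals the $F$-different; you invoke Schwede's $F$-adjunction, which is the same content), kill the obstruction $H^1$ by relative Serre vanishing supplied by hypothesis (1), lift the splitting produced by (3) along the resulting surjection, and conclude via (2) that the evaluation at $F^e_*1$ is a unit near $f(S)$. Two small points to tighten. First, the reduction to $\Delta = B+S$ is not quite safe as stated, because $K_X + (\Delta - S)$ need not be $\mathbb{Q}$-Cartier (neither $S=\lfloor\Delta\rfloor$ nor $\Delta - B$ is assumed $\mathbb{Q}$-Cartier), so ``$(X,\Delta-S)$ is Kawamata log terminal'' is not a priori meaningful; but you do not actually need this reduction, since your $\phi_X$ splits $F^e_*\mathcal{O}_X(\lceil(p^e-1)\Delta\rceil + D)$ and hence restricts to split the subsheaf $F^e_*\mathcal{O}_X(\lceil(p^e-1)B\rceil + D)$ directly, using only $B \le \Delta$. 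Second, the parenthetical justification ``(possible because $S$ is $f$-exceptional)'' for taking $D$ not containing $S$ is a red herring: to prove global $F$-regularity one must handle arbitrary effective $D = n_0 S + D'$, and what actually saves you is that $\lceil(p^e-1)\Delta\rceil \ge \lceil(p^e-1)B\rceil + (p^e-1)S$, so the extra $n_0 S$ is absorbed once $p^e - 1\ge n_0$; this is precisely the decomposition $E = n_0 S + E'$ that the paper writes out explicitly before the diagram.
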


\begin{proof} By standard perturbation techniques (e.g. see \cite[Lemma 2.8 and  2.13]{HaconXuThreeDimensionalMMP}), we may assume that the $\mathbb Q$-Cartier indices of $K_X+\Delta$ and $K_X+B$ are not divisible by $p$.
We may also assume  that $Y$ is affine.  Let $E$ be an effective divisor on  $X$. We may write $E=n_0S+E'$, where $E'$ is an effective divisor which does not contain $S$ in its support  and $n_0$ is a positive integer.
We may assume that $E'$ is Cartier, after possibly replacing $E$ by a larger divisor.
For any sufficiently divisible  positive integer $e$, we consider the following diagram:

\begin{diagram}
H^0(X, \mathcal O_X((1-p^e)(K_X+\Delta)-E')) &  \rTo^\alpha & H^0(X, \mathcal O_X)\\
\dTo^{\gamma}& &\dTo_\beta\\
H^0(S, \mathcal O_S((1-p^e)(K_S+\Delta_S)-E'|_{S})) &\rTo_\delta &  H^0(S, \mathcal O_S).\\
\end{diagram}
The fact that the diagram commutes follows from the fact that the different $\Delta_S$ coincides with the $F$-different \cite{DasOnStronglyFregInversion}.
Since $(S, \Delta_S)$ is globally $F$-regular, the map  $\delta$ is surjective for $ e \gg 0$.
On the other hand $\gamma$ is sujrective since $-(K_X + \Delta)$ is $f$-ample and $e \gg 0$.
Thus $\beta \circ \alpha$ is surjective. By Nakayama's lemma, so is $\alpha$ near $f(S)$.
Since
$\Delta \geq B$,
we have 
\[
H^0(X, \mathcal O_X((1-p^e)(K_X+\Delta)-E')) \subseteq H^0(X, \mathcal O_X((1-p^e)(K_X+B)-E)).
\]
Thus, the surjectivity of
$$H^0(X, \mathcal O_X((1-p^e)(K_X+B)-E)) \to H^0(X, \mathcal O_X)
$$
for $e \gg 0$ follows.
\end{proof}

\begin{proposition}
\cite[Lemma 2.12]{HaconXuThreeDimensionalMMP}
\label{prop.stronglyFregular}
Let $f\colon X\to T$ be a proper birational morphism of normal varieties such that $(X,\Delta)$ is globally $F$-regular over $T$.

Then $(T,\Delta_T=f_*\Delta)$ is strongly $F$-regular.
\end{proposition}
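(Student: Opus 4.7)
The statement is local on $T$, so I would begin by reducing to the case where $T=\Spec R$ is affine. The goal then becomes: for every effective Weil divisor $E$ on $T$, produce an integer $e>0$ and an $\O_T$-linear map
\[
\tilde\phi\colon F^e_*\O_T\bigl(\lceil (p^e-1)\Delta_T\rceil+E\bigr)\longrightarrow \O_T
\]
sending $F^e_*1\mapsto 1$; by \cite[Proposition 2.10]{HaconXuThreeDimensionalMMP} (or just by unwinding \autoref{d_fpure}) this is exactly what strong $F$-regularity of $(T,\Delta_T)$ demands.

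Given such an $E$, I would lift it to the strict transform $D:=f^{-1}_*E$ on $X$, which is an effective divisor satisfying $f_*D=E$. Applying the global $F$-regularity of $(X,\Delta)$ over $T$ to $D$ produces, for some $e>0$, a local splitting of $\O_X\to \O_X(\lceil (p^e-1)\Delta\rceil+D)$ over $T$, or equivalently a map
\[
\phi\colon F^e_*\O_X\bigl(\lceil(p^e-1)\Delta\rceil+D\bigr)\longrightarrow\O_X
\]
sending $F^e_*1\mapsto 1$. Pushing this forward by $f$ and using $f_*\O_X=\O_T$ (valid since $T$ is normal and $f$ is proper birational) gives
\[
f_*\phi\colon F^e_*\,f_*\O_X\bigl(\lceil(p^e-1)\Delta\rceil+D\bigr)\longrightarrow\O_T.
\]

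The key comparison is between $f_*\O_X(G)$ and $\O_T(f_*G)$ for the divisor $G=\lceil(p^e-1)\Delta\rceil+D$. Pushing forward kills the $f$-exceptional components of $\lceil(p^e-1)\Delta\rceil$, so $f_*G=\lceil(p^e-1)\Delta_T\rceil+E$, and the natural inclusion
\[
f_*\O_X(G)\;\hookrightarrow\;\O_T\bigl(\lceil(p^e-1)\Delta_T\rceil+E\bigr)
\]
is an isomorphism on the open locus where $f$ is an isomorphism, hence off a codimension-two subset of $T$. Since $\O_T$ is $S_2$ and the target is a reflexive $\O_T$-module, the map $f_*\phi$ extends uniquely across this codimension-two locus to the desired map $\tilde\phi$ out of $F^e_*\O_T(\lceil(p^e-1)\Delta_T\rceil+E)$. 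The identity $\tilde\phi(F^e_*1)=1$ holds on the common open locus, and then everywhere by the $S_2$ property of $\O_T$.

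The main subtlety, and the step I would treat most carefully, is the comparison of $f_*\O_X(G)$ with $\O_T(f_*G)$ and the $S_2$-extension argument: one must check both that the exceptional components contribute nothing after pushforward (which uses $D=f^{-1}_*E$ and that $f_*$ of a ceiling equals the ceiling of $f_*$ once exceptional parts are removed) and that passing to the reflexive hull does not destroy the property $\tilde\phi(F^e_*1)=1$. Once these are in place, the arbitrariness of $E$ yields strong $F$-regularity of $(T,\Delta_T)$ directly from \autoref{d_fpure}.
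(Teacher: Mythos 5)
The paper cites this result from Hacon--Xu (Lemma 2.12 of \cite{HaconXuThreeDimensionalMMP}) without reproducing a proof, so there is nothing in the text to compare against line by line. Your argument is correct and is the expected one: take the strict transform of $E$, use it as the auxiliary divisor in the definition of global $F$-regularity over $T$, push the resulting splitting forward along $f$ using $f_*\O_X = \O_T$, observe that $f_*\lceil(p^e-1)\Delta\rceil + E = \lceil(p^e-1)\Delta_T\rceil + E$ because pushforward kills the exceptional part and commutes with ceilings componentwise, and then use that the comparison map $f_*\O_X(G)\hookrightarrow\O_T(f_*G)$ is an isomorphism off the non-isomorphism locus of $f$ (which has codimension $\ge 2$ in $T$ by Zariski's main theorem) together with reflexivity of $\sHom(F^e_*\O_T(\cdot),\O_T)$ to extend the splitting across it.

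One small point of phrasing worth tightening: the definition of ``globally $F$-regular over $T$'' directly furnishes a splitting of $\O_T\to f_*F^e_*\O_X(\lceil(p^e-1)\Delta\rceil+D)$ as $\O_T$-modules once $T$ is affine; you do not need to first produce an $\O_X$-linear $\phi$ on $X$ itself and then push it forward. Your write-up interposes that intermediate step, which is harmless (and $\O_T$-affineness does give you the global section you want), but it is cleaner and avoids the need to justify the $\O_X$-level global splitting to work with the $\O_T$-module map from the start. Everything after that, including the reduction $f_*G = \lceil(p^e-1)\Delta_T\rceil + E$ and the $S_2$-extension keeping $\tilde\phi(F^e_*1)=1$, is correct as written.
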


\begin{proposition}
\label{kollar's comp}Let $(X,\Delta)$ be a two dimensional Kawamata log terminal pair
and let $q \in X$ be a closed point.

Then there exists a birational morphism $f:Y \to X$ such that
\begin{itemize}
\item[(1)]$f$ is an isomorphism over $X \setminus \{q\}$,
\item[(2)]$E=f^{-1}(q)$  is irreducible,
\item[(3)]$-(K_Y+\Delta_Y+E)$ is $f$-ample, where $\Delta_Y=f^{-1}_*\Delta$,
\item[(4)] $(Y, \Delta_Y+E )$ is purely log terminal.
 \end{itemize}
\end{proposition}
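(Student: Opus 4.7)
My plan is to construct $f\colon Y\to X$ as a plt blow-up: on a log resolution $g\colon Z\to X$ of $(X,\Delta\cup\{q\})$, I would isolate an exceptional divisor $E_1$ of minimal log discrepancy over $q$ and contract all the other exceptional divisors. Label the prime exceptional divisors of $g$ over $q$ as $E_1,\dots,E_n$, and set $a_i:=a(E_i,X,\Delta)>0$ (positive since $(X,\Delta)$ is Kawamata log terminal). After relabeling (and refining $Z$ if necessary), I assume $a_1=\min\{a(F,X,\Delta):F\text{ a divisor over }q\}$. The matrix $M:=(E_i\cdot E_j)_{i,j\geq 2}$ is a principal submatrix of the negative-definite intersection form on exceptional curves, hence itself negative definite, so Artin's contractibility theorem lets me contract the connected components of $\bigcup_{i\geq 2}E_i$ to normal points. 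This produces a factorization $g=f\circ h$ with $h\colon Z\to Y$ the contraction, $f\colon Y\to X$ an isomorphism away from $q$, and $E:=h(E_1)=f^{-1}(q)$ an irreducible curve, establishing (1) and (2).

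For (3), matching the coefficient of $E_1$ in the pullbacks to $Z$ shows
\[
K_Y+\Delta_Y+E\;=\;f^{\ast}(K_X+\Delta)+a_1 E,
\]
and intersecting with $E$ yields $(K_Y+\Delta_Y+E)\cdot E=a_1\cdot E^2<0$ since $E$ is $f$-exceptional in dimension two (so $E^2<0$) and $a_1>0$. Because $E$ is the only $f$-exceptional curve, this gives $f$-ampleness of $-(K_Y+\Delta_Y+E)$.

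For (4), the log discrepancies $c_i:=a(E_i,Y,\Delta_Y+E)$ for $i\geq 2$ are determined by the orthogonality conditions $h^{\ast}(K_Y+\Delta_Y+E)\cdot E_j=0$, and a direct calculation gives
\[
c_i\;=\;a_i-a_1\cdot\bigl((-M)^{-1}w\bigr)_i,\qquad w_j:=E_1\cdot E_j\geq 0.
\]
By the classical $M$-matrix lemma, $(-M)^{-1}$ has nonnegative entries, strictly positive on each connected component of the dual graph. Combined with the minimality of $a_1$ over \emph{all} divisorial valuations centered over $q$, this forces $c_i>0$ for every $i\geq 2$, so $(Y,\Delta_Y+E)$ is plt on the log resolution $Z$ and hence everywhere. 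The main technical point is this strict positivity $c_i>0$: the bare inequality $a_1\leq a_i$ on the fixed resolution does not immediately suffice, and the correct choice of $a_1$ as an extremal valuation---equivalently, arranging $Z$ so that no superfluous $(-1)$-curve among the $E_i$ with $i\geq 2$ meets $E_1$ alone with the same log discrepancy as $a_1$---is what secures the plt property.
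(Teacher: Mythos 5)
There is a genuine gap at exactly the point you flag yourself: the passage from minimality of $a_1$ to the strict positivity $c_i > 0$. Extracting a divisor that merely computes the minimal log discrepancy does \emph{not} in general produce a plt pair, and no amount of ``refining $Z$'' or excluding ``superfluous $(-1)$-curves'' fixes this, because the failure already occurs on the minimal resolution with no $(-1)$-curves present. Concretely, take $X$ to be a $D_4$ singularity with $\Delta = 0$ and $q$ the singular point. The minimal resolution (which is already a log resolution) has exceptional graph a star: a central $(-2)$-curve $E_0$ meeting three peripheral $(-2)$-curves $E_1, E_2, E_3$, and all four have log discrepancy $1 = \mathrm{mld}_q(X,0)$. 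Your recipe allows you to choose, say, $E_1$ as the extracted divisor. Contracting $E_0, E_2, E_3$ and solving $h^*E \cdot E_j = 0$ gives $e_0 = 1$, $e_2 = e_3 = \tfrac12$, hence $c_0 = a_0 - a_1 e_0 = 1 - 1 = 0$: the resulting pair $(Y,E)$ is log canonical but \emph{not} plt. (Choosing $E_0$ instead happens to work, with $e_1 = e_2 = e_3 = \tfrac12$ and $c_i = \tfrac12$, but nothing in your argument singles out $E_0$, and proving that \emph{some} mld-computing valuation always works would require an argument you do not supply.) So the mechanism you rely on is false as stated.

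The paper avoids this entirely via the standard tie-breaking construction: one first produces an auxiliary effective $\mathbb{Q}$-Cartier divisor $H$ such that $(X, \Delta + H)$ is log canonical with a \emph{unique} exceptional divisor $E$ over $q$ of log discrepancy zero (achieved by starting from any non-klt $\Delta + L$, pulling back to a log resolution, perturbing the exceptional coefficients downward using a $g$-ample $-\sum b_i E_i$, and then perturbing the non-extracted lc places off $1$). Once uniqueness is arranged, extracting $E$ makes $(Y, \Delta_Y + H_Y + E)$ plt by construction, hence so is $(Y, \Delta_Y + E)$; and $f$-ampleness of $-(K_Y + \Delta_Y + E)$ falls out since it is $f$-$\mathbb{Q}$-linearly equivalent to $-mE$ with $m>0$ and $-E$ is $f$-ample. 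Your computations for (1)--(3) are fine and the $M$-matrix observations are correct, but the proof of (4) needs to be replaced by a uniqueness-forcing perturbation rather than an mld-extremality argument.
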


We  follow closely the proof in \cite[Remark 1]{xu-fund}. The exceptional divisor $E$ is called the {\em Koll\'ar component} of $(X,\Delta)$.

\begin{proof}
Without loss of generality we may assume that $X$ is affine.
We first show that there exists an effective $\mathbb Q$-Cartier divisor $H$ on $X$ such that there exists exactly one exceptional  divisor $E$ over $X$ with $f(E) = q$, such that $a(E,X,\Delta+H)=0$.
Let $L\ge 0$ be a  $\mathbb Q$-divisor on $X$ such that $(X,\Delta+L)$ is log canonical but not Kawamata log terminal at the point $q$. We may assume that $\lfloor \Delta+L\rfloor =0$. Let $g\colon Z\to X$ be a log resolution of $(X,\Delta+L)$. After possibly replacing $X$ by a smaller open subset, we may assume that $g$ is an isomorphism except over $q$. Then for any $t\in [0,1]$, we may write
$$K_Z+ \Delta_Z+ tL_Z + \sum_{i=1}^k a_i(t)E_i = g^*(K_X+\Delta+tL)$$
where $\Delta_Z=g^{-1}_*\Delta$, $L_Z=g^{-1}_*L$, and $E_1,\dots,E_k$ are prime exceptional divisors.  Furthermore, for each $i=1,\dots,k$,  the function $a_i(t)$ is linear, $a_i(t)\le 1$ for any $t\in [0,1]$ and there exists $i\in \{ 1,\dots,k\}$ such that $a_i(1)=1$. Let $b_1,\dots,b_k$ positive integers so that the divisor  $A=-\sum_{i=1}^k b_i E_i$  is $g$-ample.
Fix $\varepsilon >0$ be a (small)  rational number and choose $\delta>0$ to be sufficiently small so that
$$A'=\varepsilon g^*L+\delta A$$
is effective.
Let $c_i(t)=a_i(t)+\delta b_i$, for any $i=1,\dots,k$.
Then
$$K_Z+ \Delta_Z+ tL_Z + A'+ \sum_{i=1}^k c_i(t)E_i = g^*(K_X+\Delta+(t+\varepsilon)L).$$
Then there exists $t_0\in (0,1)$ such that $\max \{ c_i(t_0) \mid i=1,\dots, k\}=1$. We may assume that $c_1(t_0)=1$.
For any $i=2,\dots,k$, let $\eta_i\ge 0$ be  rational numbers such that $\eta_i>0$ if and only if $c_i(t_0)=1$.
We may assume that $\eta_i$ (and $\varepsilon$) are sufficiently small so that the following holds:  if we denote $c_i=c_i(t_0)-\eta_i$,
$I=\{i=2,\dots,k\mid c_i>0\}$, there exists an effective $\mathbb Q$-divisor
$$A''\sim_{\mathbb Q} A' + \sum_{i \geq 2} \eta_i E_i$$
such that if
$$\Theta=t_0L_Z+A''+ \sum_{i\in I} c_iE_i$$
then
$(Z,E_1+ \Delta_Z+ \Theta)$ is purely log terminal.

We have
$$K_Z+E_1+\Delta_Z + \Theta +\sum_{c_i<0} c_i E_i \sim_{\mathbb Q}g^*(K_X+\Delta+(t_0+\varepsilon )L).$$
Thus, if $H=g_*\Theta$, then $H\sim_{\mathbb Q} (t_0+\varepsilon)L$, $(X,\Delta+H)$ is log canonical and
there exists exactly one exceptional divisor $E = E_1$ over $X$ such that $a(E,X,\Delta+H)=0$, as claimed.

Thus, there exists a birational morphism   $f:Y \to X$  from a normal surface $Y$ which extracts $E$ (e.g. see \cite[Proposition 3.1.2]{ProkhorovComplements}). Note that the proof depends on the existence of a minimal model for a two dimensional dlt pair over $X$, which holds also in positive characteristic (e.g. see \cite[Theorem 0.4]{TanakaSurfaceMMP}).
 In particular, $f$ admits  a unique $f$-exceptional divisor $E$ and (1) and (2) follow. If $\Delta_Y$ and $H_Y$ are the strict transform of $\Delta$ and $H$ on $Y$ respectively, then  $(Y, \Delta_Y+H_Y+E)$ is purely log terminal. Thus, (4) follows.  We may write  $f^*H=H_Y+mE$, for some $m>0$.
Since
\[
K_Y+\Delta_Y+H_Y+E=f^*(K_X+\Delta+H)\sim_{\mathbb{Q},f} 0,
\]
it follows that $-(K_Y+\Delta_Y+E)\sim_{\mathbb Q,f}  - mE$.
 Since  $-E$ is $f$-ample, (3) follows.
\end{proof}

\subsection{Complements}
Similarly to \cite{HaconXuThreeDimensionalMMP}, we will use Shokurov's theory of complements. The results in this section hold over any algebraically closed field.

\begin{definition}Let $f\colon X\to Y$ be a proper morphism of normal varieties and let $(X,D=S+B)$ be a log canonical pair where $S=\lfloor D\rfloor$ and $B\ge 0$ is a $\mathbb Q$-divisor whose support does not contain any component of $S$. Let $n$ be a positive integer.
We say that $D'$ is an $n$-{\em complement} of $(X,D)$ over $Y$ if
\begin{enumerate}
\item[(1)] $n(K_X+D')\sim_f 0$,
\item[(2)] $(X,D')$ is log canonical, and
\item[(3)] $nD'\ge nS + \lfloor (n+1) B\rfloor .$
\end{enumerate}
If $D'\ge D$, then the complement $D'$ is called {\em effective}.
If $(X,D)$ admits an $n$-complement over $Y$, then we say that $(X,D)$ is $n$-{\em complementary} over $Y$.
\end{definition}

\begin{lemma}\label{l_complement}
Let $n$ be a positive integer and let  $I\subseteq (0,1]$ be a finite set.
Let $X=\mathbb P^1$ and let $B$ be a $\mathbb Q$-divisor on $X$ such that
\begin{enumerate}
\item[(1)] the  coefficients of $B$ belong to $D(I)$, and
\item[(2)]  $-(K_X+B)$ is nef.
\end{enumerate}

Then, there exists a positive integer $M$, depending only on $I$ and $n$, such that $(X,B)$ admits an $nq$-complement, for some positive integer $q\le M$.
\end{lemma}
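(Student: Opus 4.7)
My plan is to construct the required $nq$-complement explicitly, using a Shokurov-style case analysis. First I bound the combinatorial complexity of $B$. Since $I\subseteq(0,1]$ is finite, any representation $\sum a_j i_j\le 1$ with $a_j\in\mathbb N$, $i_j\in I$ satisfies $\sum a_j\le 1/\min I$, so the set $I_+$ is also finite. Consequently $\delta:=\min(D(I)\cap(0,1])>0$ and $B$ has at most $N_0:=\lfloor 2/\delta\rfloor$ components. Moreover, for any $\epsilon>0$, the set $D(I)\cap(0,1-\epsilon]$ has bounded denominators because $b=(m-1+f)/m\le 1-\epsilon$ forces $m\le 1/\epsilon$ while $f\in I_+$ ranges over a finite set.

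In the Calabi--Yau case $\deg B=2$, I write each coefficient as $b_i=(m_i-1+f_i)/m_i$ with $f_i\in I_+$ and $m_i\in\mathbb N_{>0}$. The relation $\sum b_i=2$ rearranges to $\sum(1-f_i)/m_i=k_B-2$, where $k_B\le N_0$. Since $I_+$ is finite and $k_B$ is bounded, only finitely many tuples $(f_i)$ arise, and a standard Sylvester-type Diophantine argument (sort the $m_i$'s in increasing order, bound the smallest, then induct) shows each such equation has only finitely many positive-integer solutions. Thus the multiset of coefficients of $B$ belongs to a finite set depending only on $I$, admitting a common denominator $K_0=K_0(I)$. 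Taking $D':=B$ and $q:=\lcm(n,K_0)/n\le K_0$ yields an $nq$-complement: $nq(K_X+B)\sim 0$ on $\mathbb P^1$, the pair is log canonical, and the floor inequality holds with equality since $nqb_i\in\mathbb Z$.

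In the non-Calabi--Yau case $\deg B<2$, I round each coefficient $b_i\ge 1-\epsilon(I)$ up to $1$, for a small $\epsilon=\epsilon(I)$; the remaining fractional coefficients then have denominator dividing $K_1=K_1(I)$. Since rounding up only strengthens the floor inequality at the affected points, a complement of the rounded pair $(\mathbb P^1,\tilde B)$ is also a complement of $(\mathbb P^1,B)$; and the choice of $\epsilon<\delta/2$ ensures that at most two coefficients can be rounded (else $\deg B>2$). When $\deg\tilde B\le 2$, one extends $\tilde B$ to a degree-$2$ divisor $D'$ by adjusting and/or adding a component with coefficient a suitable multiple of $1/K_1$, and the index $nq=\lcm(n,K_0,K_1)$ gives the desired complement.

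The main obstacle is the borderline subcase where $\deg B$ is very close to $2$ and rounding yields $\deg\tilde B>2$, so the direct extension fails. This is resolved by taking a larger complement index $K=nq$ (of order $1/\epsilon$, still bounded in terms of $I$ and $n$) so that the permitted lower bound $\lfloor(K+1)b_i\rfloor/K$ is strictly less than $b_i$ at the unrounded fractional points; this slack is enough to bring $\sum D'_{P_i}$ back down to $2$ while still satisfying every floor inequality. Combining both cases, $M=M(I,n)$ can be taken as an explicit function of $K_0(I)$, $K_1(I)$, $\epsilon(I)$, and $n$.
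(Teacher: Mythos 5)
Your approach is genuinely different from the paper's: the paper disposes of this lemma by citing Prokhorov--Shokurov's Proposition 5.7 and observing that the argument there is characteristic-free, whereas you attempt a self-contained proof. The attempt is a reasonable sketch of the Shokurov-style case analysis, and the Calabi--Yau case $\deg B=2$ is essentially sound: $D(I)$ is a DCC set accumulating only at $1$, the number of components is bounded by $2/\delta$, and the Sylvester-type finiteness argument works (with the minor caveat that terms with $f_i=1$ contribute $0$ for every choice of $m_i$, so what is finite is the set of coefficient tuples, not the set of $(m_1,\dots,m_k)$).

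The gap is in your resolution of the borderline subcase of $\deg B<2$. Your fix is to choose $K=nq$ so that $\lfloor (K+1)b_i\rfloor/K$ is \emph{strictly} less than $b_i$ at each unrounded fractional point and use the slack to push the total degree back to $2$. This is false in precisely the relevant situation: if $Kb_i\in\mathbb Z$ (which is forced if $D'_i=b_i$ is to be an admissible coefficient of a $K$-complement), then for $b_i<1$ one has $\lfloor (K+1)b_i\rfloor=Kb_i+\lfloor b_i\rfloor=Kb_i$, so $\lfloor (K+1)b_i\rfloor/K=b_i$ exactly, with no slack. If instead $Kb_i\notin\mathbb Z$, then depending on the fractional part of $Kb_i$ one can equally well have $\lfloor (K+1)b_i\rfloor/K>b_i$, which makes the required inequality \emph{harder}, not easier. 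There is no single bounded $K$ producing strict slack at every unrounded point, and the claim ``this slack is enough to bring $\sum D'_{P_i}$ back to $2$'' is not substantiated.

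The borderline case is real and needs a different idea. A correct route: the unrounded coefficients lie in the finite set $D(I)\cap(0,1-\epsilon]$, so their possible partial sums form a finite set; hence any such sum exceeding $1$ exceeds it by at least a fixed $\gamma=\gamma(I,\epsilon)>0$. Since $\deg B<2$, this forces the one near-$1$ coefficient to satisfy $b_1<1-\gamma$, i.e.\ to lie in the bounded-denominator regime after all, so the offending configuration never occurs once $\epsilon$ is chosen correctly. Without an argument of this type, the bound $M(I,n)$ in the borderline case is not established, and your proof does not close.
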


\begin{proof}
The result follows  from \cite[Proposition 5.7]{ProkhorovShokurovComplements}. Note that although the result in \cite{ProkhorovShokurovComplements} is stated only over the complex field,  the proof is characteristic free.
\end{proof}

\begin{theorem}\label{t_complement}
Let $I\subseteq [0,1] \cap \bQ$ be a finite set of rational numbers.  Then there exists a positive integer $N$ such that if $f\colon S\to T$ is a proper map of normal surfaces and $(S,B)$ is a log pair such that
\begin{enumerate}
\item[(1)] $(S,B)$ is Kawamata log terminal,
\item[(2)] $-(K_S+B)$ is $f$-nef, and
\item[(3)] the coefficients of $B$ belong to $I$,
\end{enumerate}
then there exists an effective $m$-complement $B^c$ of $(S,B)$ over $T$ such that $(S,B^c)$ is log canonical but not Kawamata log terminal and $m\le N$.
\end{theorem}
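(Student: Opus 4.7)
\medskip
\noindent\emph{Plan.} I would follow Shokurov's theory of complements in dimension two, reducing every case to the one-dimensional statement \autoref{l_complement} on $\mathbb{P}^{1}$, splitting the argument according to $\dim f(S)$. Suppose first that $f$ is birational. Localising $T$ one may assume that the exceptional locus of $f$ lies over a single closed point $t\in T$. Applying \autoref{kollar's comp} to $(T,f_{*}B)$ at $t$ produces a Koll\'ar component $g\colon S'\to T$ with a unique prime exceptional divisor $E$, such that $(S',B_{S'}+E)$ is purely log terminal, $-(K_{S'}+B_{S'}+E)$ is $g$-ample, $B_{S'}=g^{-1}_{*}(f_{*}B)$ has coefficients in $I$, and $E\cong\mathbb{P}^{1}$ (from the plt property and normality of $E$). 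By \autoref{l_adjunction} the different $\Theta$ determined by $(K_{S'}+B_{S'}+E)|_{E}=K_{E}+\Theta$ has coefficients in $D(I)$, and by the classification of plt surface germs with boundary coefficients in a fixed finite set we may fix a positive integer $n=n(I)$ such that $n(K_{S'}+B_{S'}+E)$ is $\mathbb{Q}$-Cartier of bounded index near $E$.

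\autoref{l_complement} then furnishes an integer $q\le M(I,n)$ and an $nq$-complement $\Theta^{c}\ge\Theta$ of $(\mathbb{P}^{1},\Theta)$. Using the short exact sequence
\[
0\to\mathcal{O}_{S'}(-nq(K_{S'}+B_{S'}+E)-E)\to\mathcal{O}_{S'}(-nq(K_{S'}+B_{S'}+E))\to\mathcal{O}_{E}(nq(\Theta^{c}-\Theta))\to 0,
\]
together with a relative Kawamata-Viehweg-type vanishing for surfaces in every characteristic (accessible through the two-dimensional MMP framework of Tanaka already invoked in the proof of \autoref{kollar's comp}), the restriction map is surjective on $g_{*}$-global sections, so $\Theta^{c}-\Theta$ lifts to an effective $\mathbb{Q}$-divisor $\tfrac{1}{nq}D$ on $S'$. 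Setting $B'^{c}:=B_{S'}+E+\tfrac{1}{nq}D$ gives an $nq$-complement of $(S',B_{S'}+E)$ over $T$, and $B^{c}:=g_{*}B'^{c}$ is the sought complement of $(S,B)$: it is log canonical, and fails to be klt because $E$ remains a log canonical place, so $f(E)$ carries a non-klt centre of $(S,B^{c})$. The other two cases reduce to this one. If $\dim f(S)=1$ the generic fibre is $\mathbb{P}^{1}$, \autoref{l_complement} applies directly on it, and the complement is extended across the finitely many special fibres by a standard argument. If $\dim f(S)=0$ one runs the two-dimensional MMP for $(S,B)$, available in every characteristic, to reduce to a Mori fibre space (fibration case) or to a log del Pezzo with Picard number one (Koll\'ar component case). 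Taking $N$ to be the maximum of the finitely many indices produced in each case completes the proof.

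The hardest step I anticipate is the lifting of $\Theta^{c}$ from $E$ to $S'$ in positive characteristic: it requires a relative Kawamata-Viehweg type vanishing for a plt surface pair with $g$-antiample log canonical divisor, and in dimension two this can be extracted from Tanaka's surface MMP, but needs to be formulated so as to cope with the non-integrality of $-nq(K_{S'}+B_{S'}+E)$ around $E$. A secondary point is the bound on the Cartier index $n$ of $K_{S'}+B_{S'}+E$ near $E$ in terms of $I$ alone, which follows from boundedness of plt surface germs with a fixed coefficient set and is what ultimately makes the bound $N$ in the statement uniform over the allowed pairs.
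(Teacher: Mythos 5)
Your plan follows the same high-level strategy as the paper — Shokurov complements, reducing to the $\bP^1$ case \autoref{l_complement} by adjunction to a Koll\'ar-type exceptional divisor — but the specific construction differs at a point where the difference matters, and there are real gaps.

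The central issue is \emph{where} the non-klt place is constructed. You apply \autoref{kollar's comp} to $(T, f_*B)$ and obtain a Koll\'ar component $g\colon S'\to T$, then lift a complement from $E$ to $S'$ and push it forward to $T$. But the theorem asks for a complement of $(S,B)$, \ie a divisor on $S$, and $S'$ need not dominate $S$. Your final line ``$B^c:=g_*B'^c$ is the sought complement of $(S,B)$'' produces a divisor on $T$, not $S$, and silently identifies the two. To go from $T$ to $S$ you would have to pull back, which is not free: you must check that $f^*(K_T+B^c_T)-K_S$ is effective, lies above $B$, and has bounded index (none of this is addressed). The paper avoids all of this by working entirely over $S$: it builds the auxiliary nef divisor $H$ so that $(S,B+H)$ has a \emph{unique} non-klt place $E$ over $S$, extracts $E$ via $g\colon X\to S$, and crucially defines $B''_X=E\vee B_X$ so that $g_*B''_X=B$ and $-(K_X+B''_X)$ is $h$-nef (over $T$); then the $nq$-complement on $X$ pushes directly to $S$ by \cite[Prop.~4.3.1]{ProkhorovComplements}. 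The nefness over $T$ of $-(K_X+B''_X)$ — the short argument that any offending curve would sit in $\Supp(H')$, which is not $h$-contracted — is the lemma that glues the construction over $S$ to the complement-over-$T$ statement, and it has no analogue in your outline.

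Two further points. First, the lifting step: your short exact sequence requires $nq(K_{S'}+B_{S'}+E)$ to be integral and Cartier, which forces a uniform bound on the Cartier index near $E$ in terms of $I$; you flag this yourself but do not resolve it. The paper does not need it because it cites \cite[Prop.~6.0.6]{ProkhorovComplements}, which already packages the relative Kawamata--Viehweg argument for plt surface germs with the correct round-downs (and the paper only has to observe that this vanishing persists in positive characteristic, via \cite[Lemma 2.23]{HaconXuThreeDimensionalMMP}). Second, the effectivity requirement $B^c\ge B$ is handled in the paper at the very start by fixing $n$ with $nI\subseteq\bZ$, so that $\lfloor(nq+1)B\rfloor/nq\ge B$; this is not in your argument. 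Finally, the case split by $\dim f(S)$ is misdirected here: both $S$ and $T$ are surfaces, so $f$ is essentially birational in the intended use, and the paper runs a single uniform argument with no such split.
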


\begin{proof}
Assume that $n$ is a positive integer such that $na$ is integral for any $a\in I$. We want to show that there exists a positive integer $M$, depending only on $I$ such that, if $(S,B)$ is a log pair which satisfies (1),(2) and (3), then $(S,B)$ is $nq$-complementary, for some positive integer $q\le M$. Since $nqB$ is integral, any $nq$-complement of $(S,B)$ is automatically effective.

As in the proof of \autoref{kollar's comp}, there exists a nef $\mathbb Q$-divisor $H\ge 0$ such that $(S,B+H)$ admits exactly one exceptional divisor $E$ over $S$ with log discrepancy zero. We may assume that $(S, B + H)$ has no codimension one log canonical centers and $\mathrm{Supp}\,H$ dose not contain any $f$-exceptional divisors.   We denote $B'=B+H$. Then $(S,B')$ is log canonical but not Kawamata log terminal. Let $g\colon X\to S$ be the birational morphism which extracts $E$ and let $h\colon X\to T$ be the induced morphism. We may write
$$K_X+B'_X=g^*(K_S+B')\qquad\text{and}\qquad K_X+B_X=g^*(K_S+B)$$
for some $B'_X\ge 0$ such that $B'_X=E+\{ B'_X \}$, where $E=\lfloor B'_X\rfloor$, and $B_X\le B'_X$.  Note that $B_X$ is not necessarily effective. We denote by $H'$  the strict transform of $H$ in $X$.

We denote by $B''_X = E \vee B_X$ the divisor obtained by replacing  the coefficient corresponding to  $E$ in $B_X$  by 1 and by keeping  all the other coefficients of $B_X$ unchanged. Thus, we have
\[
B_X\le B''_X\le B'_X\quad\text{and}\quad g_*B''_X=B
\]
since $B'_X-B''_X=H'$.
Note that the difference of $B''_X$ and $B_X$ is only the coefficient of $E$. In addition, we have that  $(X,B''_X)$ is plt.

We claim that $-(K_X+B''_X)$ is nef over $T$. Indeed if we have an $h$-exceptional curve $C$ with $-(K_X+B''_X).C<0$, then $C$ is contained in the support of $B'_X-B''_X$ since  $-(K_X+B'_X)$ is nef over $T$.  However, the support of $B'_X-B''_X$ is equal to the support of $H'$ which is not contracted by $h$. This is a contradiction and therefore  $-(K_X+B''_X)$ is nef over $T$.

Let $\Theta\ge 0$ be the $\mathbb Q$-divisor on $E$ defined by
\[
K_{E}+\Theta=(K_{X}+ B''_X)|_{E}.
\]
By \autoref{l_adjunction}, the coefficients of $\Theta$ belong to $D(I)$. Thus, by  \autoref{l_complement}, there exists a positive integer $M$, depending only on $I$, such that
$(E,\Theta)$ admits an $nq$-complement, for some positive integer $q\le M$.
Using the same methods as in \cite[Proposition 6.0.6]{ProkhorovComplements}, it follows that $(X, B''_X)$ is $nq$-complementary over $T$. Note that the proof in \cite{ProkhorovComplements} relies on the Kawamata-Viehweg vanishing theorem for proper birational morphisms of surfaces, which also holds in positive characteristic (e.g. see \cite[Lemma 2.23]{HaconXuThreeDimensionalMMP}).

 Thus, by \cite[Proposition 4.3.1]{ProkhorovComplements}, $(S,B)$ is $nq$-complimentary over $T$. Since $(X,B''_X)$ is not Kawamata log terminal, it follows that the  complement that we obtained is log canonical but not Kawamata log terminal.
\end{proof}

\section{Uniform bounds on the $F$-pure threshold for line arrangements in $\mathbb{A}^2$}\label{s_uniform}

Throughout the section we fix $X = \bA^2_k = \Spec k[x,y]$ with maximal ideal at the origin $\mfrm = \langle x, y \rangle$.

For each $i = 1, \ldots, \ell$, define a distinct divisor (line) $D_i = \Div(x - \lambda_i y)$ going through the origin and choose integers $a_i > 0$.  Consider the divisor $D = \sum a_i D_i$.  We fix $d = \sum a_i$.

Our first goal is to recall known bounds on the $F$-pure threshold of $(X, D)$. First we handle the highly pathological case when one of the $a_i$s is very large.

\begin{lemma}
\label{lem.DegenCaseOfFPT}
Suppose that for some $i$, $2 a_i \geq d$.

Then
\[
\fpt(X, D) = \lct(X,D)={1 \over a_i}.
\]
\end{lemma}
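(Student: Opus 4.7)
The plan is to prove the two equalities separately, starting with the log canonical threshold.

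For $\lct(X, D) = 1/a_i$, I would blow up the origin, obtaining a birational morphism $\pi \colon Y \to X$ with a single exceptional divisor $E \cong \mathbb{P}^1$ meeting the strict transforms $\widetilde{D}_j$ transversally (a single blow-up resolves the arrangement since the $D_j$ are distinct lines through $0$). The pullback formula
\[
\pi^*(K_X + tD) = K_Y + t \sum_j a_j \widetilde{D}_j + (td - 1) E
\]
together with the simple normal crossing property of the total transform shows that $(X, tD)$ is log canonical if and only if every coefficient on the right is at most $1$. This yields the constraints $t \leq 1/a_j$ for every $j$ and $t \leq 2/d$. Since $2a_i \geq d$ forces $a_i = \max_j a_j$ and $1/a_i \leq 2/d$, the binding constraint is $t \leq 1/a_i$, and hence $\lct(X, D) = 1/a_i$.

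For the $F$-pure threshold, the inequality $\fpt \leq \lct$ is standard (sharp $F$-purity implies log canonicity), so it remains to show $\fpt(X, D) \geq 1/a_i$. Relabel so $i = 1$ and apply a linear change of variables to assume $f_1 = x$. Then $f_j = \alpha_j x + \beta_j y$ for $j \geq 2$ with $\beta_j \neq 0$, since $D_j \neq D_1$. Let $h = \prod_j f_j^{a_j}$. For each $e$ with $p^e > a_1$, set $N_e := \lfloor (p^e - 1)/a_1 \rfloor$ and
\[
t_e := \frac{N_e}{p^e - 1} \leq \frac{1}{a_1}.
\]
With this choice, $t_e (p^e - 1) a_j = N_e a_j$ is a non-negative integer for every $j$, so Fedder's criterion for sharp $F$-purity of $(X, t_e D)$ collapses to the single condition $h^{N_e} \notin \mathfrak{m}^{[p^e]}$.

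The key step is to exhibit a monomial of $h^{N_e}$ escaping $\mathfrak{m}^{[p^e]}$. In the expansion
\[
h^{N_e} = x^{N_e a_1} \prod_{j \geq 2} (\alpha_j x + \beta_j y)^{N_e a_j},
\]
the coefficient of $x^{N_e a_1} y^{N_e (d - a_1)}$ equals $\prod_{j \geq 2} \beta_j^{N_e a_j}$, which is nonzero. Both exponents are strictly below $p^e$: indeed $N_e a_1 \leq p^e - 1$ by definition, and $N_e (d - a_1) \leq N_e a_1$ by the hypothesis $d \leq 2 a_1$. Hence $(X, t_e D)$ is sharply $F$-pure, and since $t_e \to 1/a_1$ as $e \to \infty$, we obtain $\fpt(X, D) \geq 1/a_1$ and equality throughout.

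The main subtlety to flag is that one cannot in general test sharp $F$-purity at $t = 1/a_i$ itself: when $p \mid a_i$ the ceilings $\lceil (p^e - 1) a_j / a_i \rceil$ never match $(p^e - 1) a_j / a_i$, and $(X, \tfrac{1}{a_i} D)$ may fail to be sharply $F$-pure at every single level $e$. Approaching $1/a_i$ from below via the sequence $t_e$ chosen above sidesteps this, because for these particular values all the rounding in Fedder's criterion vanishes.
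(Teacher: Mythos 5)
Your proof is correct, and it takes a genuinely different and more hands-on route than the paper's. The paper first disposes of the degenerate case where $2a_i \geq d$ holds for two indices (forcing $a_i = a_j = d/2$ and an SNC pair), and in the remaining case reduces to dimension one by $F$-adjunction: writing $\frac{1}{a_i}D = D_i + \frac{1}{a_i}D'$ with $D' = \sum_{j\neq i} a_j D_j$, it suffices to check that $(D_i, \frac{1}{a_i}D'|_{D_i}) \cong (\mathbb{A}^1, \frac{d'}{a_i}O)$ is sharply $F$-pure, which holds because $d'/a_i \leq 1$. You instead prove $\fpt(X,D) \geq 1/a_i$ directly by a Fedder-type computation, exhibiting the monomial $x^{N_e a_1} y^{N_e(d-a_1)}$ with nonzero coefficient $\prod_{j\geq 2}\beta_j^{N_e a_j}$ and both exponents strictly below $p^e$. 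What your approach buys is robustness, and the subtlety you flag is real: when $p\mid a_i$, the pair $(X,\frac{1}{a_i}D)$ can fail to be sharply $F$-pure at every level $e$. For instance, with $p=2$ and $D = 2D_1 + D_2 + D_3$, the Fedder polynomial at level $e$ has degree $2^{e+1}-1$, which cannot be split into two exponents each at most $2^e-1$; so the rounding pushes everything into $\mathfrak{m}^{[2^e]}$. In that situation the $F$-adjunction step requires care because the $F$-different need not equal $\frac{1}{a_i}D'|_{D_i}$ at levels where $(p^e-1)/a_i$ is not an integer. Your choice to approach $1/a_i$ from below along $t_e = N_e/(p^e-1)$, where all roundings vanish, cleanly sidesteps this, handles both the unique-index case and the SNC case uniformly, and is arguably a more complete argument for the stated equality $\fpt = \lct = 1/a_i$.
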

\begin{proof}
Note that this condition can be satisfied for at most two values of $i$.  In the case where $2 a_i, 2 a_j \geq d$, $i \neq j$, then $a_i = a_j = d/2$ and we have a simple normal crossings pair and the statement is obvious.

Thus we can assume that there is a unique $i$ with $2 a_i \geq d$.
Set $D' = \sum_{j \neq i} a_j D_j$ and fix $d' = \sum_{j \neq i} a_j$.
By $F$-adjunction \cite{HaraWatanabeFRegFPure,SchwedeFAdjunction}, it is sufficient to verify that $(D_i, {1 \over a_i} D'|_{D_i})$ is sharply $F$-pure.  But $D_i \cong \bA^1$ and $D'|_{D_i} = d' O$ where $O$ is the origin.  But $d'/a_i \leq 1$ which proves that $(D_i, {1 \over a_i} D'|_{D_i})$ is sharply $F$-pure and completes the proof of the lemma.
\end{proof}

We recall the following result independently obtained by Hara and Monsky \cite{HaraDimensionTwo}. Similar bounds (which are good enough for our purposes) can also be obtained by modifying the method of \cite[Lemmas 3.3, 3.4]{BhattSinghFPTCalabiYau} which also works in higher dimensions.

\begin{theorem}\textnormal{(\cite[Proposition 3.3]
{HaraMonskyFPureThresholdsAndFJumpingExponents}, \cite[Theorem 17]{MonskyMasonsTheorem})}
\label{prop.BoundOnFPTLineArrangements}
Suppose $D = \sum_{i = 1}^{\ell} a_i D_i$ is a line arrangement through the origin in $X = \bA^2$ as above with $d = \sum a_i$. If $2a_i < d$ for all $i$ then
\[
\fpt(X, D) \geq {2 p - \ell + 2 \over dp }.
\]
\end{theorem}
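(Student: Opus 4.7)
The strategy is to translate the desired lower bound on $\fpt(X,D)$ into a concrete non-containment statement: produce an integer $n$, as large as possible, with $f^n\notin\mfrm^{[p^e]}$, where $f=\prod_{i=1}^\ell(x-\lambda_iy)^{a_i}$ defines $D$ and $\mfrm=(x,y)$. Any such $n$ gives $\fpt(X,D)\geq n/(p^e-1)$ by the definition of sharp $F$-purity, and a suitable choice of $e$ will yield the stated bound $(2p-\ell+2)/(dp)$. I would dehomogenize by setting $x=1$ and working with $g(y):=f(1,y)=\prod_{i=1}^\ell(1-\lambda_iy)^{a_i}$ of degree $d$; since $f^n$ is homogeneous of degree $nd$, membership in $\mfrm^{[p^e]}=(x^{p^e},y^{p^e})$ is equivalent to the vanishing of every coefficient of $y^j$ in $g^n$ with $j\in[nd-p^e+1,p^e-1]$, or, equivalently, to an identity $g^n=A(y)+y^{p^e}B(y)$ with $\deg A,\deg B\leq nd-p^e$.

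Suppose such an identity holds with $nd\leq 2p^e-\ell+1$. Together with the hypothesis $2a_i<d$ (which, as a sanity check, already forces $\ell\geq 3$), this range implies $na_i<p^e$ for every $i$. The key input is the Frobenius identity $1-\lambda_i^{p^e}y^{p^e}=(1-\lambda_iy)^{p^e}$ in characteristic $p$: reducing modulo $(1-\lambda_iy)^{na_i}$ gives $y^{p^e}\equiv\lambda_i^{-p^e}$, so since $(1-\lambda_iy)^{na_i}$ already divides $g^n$ it must also divide the polynomial $A+\lambda_i^{-p^e}B$, for every $i=1,\dots,\ell$.

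I would then consider the rational function $R(y):=-B(y)/A(y)$ (after dividing out $\gcd(A,B)$ if needed, which changes the bound only cosmetically). The degenerate case $B=cA$ is ruled out directly: it would yield $g^n=A\cdot(1+c^{1/p^e}y)^{p^e}$ and hence $(1-\lambda_iy)^{p^e}\mid g^n$ for some $i$, contradicting $na_i<p^e$. So $R$ is nonconstant of degree $N\leq nd-p^e$, and separable since $N<p^e$ in our range. The divisibility above forces $R(1/\lambda_i)=\lambda_i^{p^e}$ with ramification index at least $na_i$ at each $1/\lambda_i$, so by the Riemann--Hurwitz inequality $\sum_P(e_P-1)\leq 2N-2$ (valid in positive characteristic, possibly strict due to wild ramification) one obtains
\[
nd-\ell\;=\;\sum_{i=1}^{\ell}(na_i-1)\;\leq\;2(nd-p^e)-2,
\]
that is $nd\geq 2p^e-\ell+2$, contradicting $nd\leq 2p^e-\ell+1$. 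Hence $f^n\notin\mfrm^{[p^e]}$ whenever $nd\leq 2p^e-\ell+1$, and maximising $n$ and choosing $e$ appropriately yields the stated bound on $\fpt(X,D)$.

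The main technical obstacle is the Riemann--Hurwitz step: verifying separability of $R$ carefully, handling the minor case analysis around $\gcd(A,B)\neq 1$ (so that the order-of-vanishing count for $R-\lambda_i^{p^e}$ at $1/\lambda_i$ really is at least $na_i$), and then performing the final arithmetic that converts the integer bound $\nu_e\geq\lfloor(2p^e-\ell+1)/d\rfloor$ into the precise form $(2p-\ell+2)/(dp)$, which may require taking $e\geq 2$ when $p$ is small compared to $\ell+d$.
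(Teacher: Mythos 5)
The paper itself gives no proof of this theorem; it is cited from Hara--Monsky and Monsky, so there is nothing internal to compare against. On its own merits, your sketch captures the correct underlying idea (Monsky's argument is indeed a Mason/$abc$ argument, which is a disguised Riemann--Hurwitz count on $\mathbb{P}^1$), but it contains a fatal gap in the separability step, and the conclusion it would yield is actually false.

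The gap is the assertion that $R = -B/A$ is ``separable since $N < p^e$.'' A rational map $\mathbb{P}^1 \to \mathbb{P}^1$ of degree $< p^e$ need not be separable once its degree is $\geq p$; the correct criterion is $R \notin k(y^p)$, i.e.\ $R' \neq 0$, and this is not implied by a degree bound of $p^e$. Without separability the inequality $\sum_P (e_P - 1) \leq 2N - 2$ simply fails, and this is exactly where the argument breaks. Concretely, take $p = 5$, $f = x^5y - xy^5$ (the product of the $\ell = p+1 = 6$ lines through the origin rational over $\mathbb{F}_5$, all $a_i = 1$, $d = 6$), $e = 2$, $n = 5$. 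Then $nd = 30 \leq 2p^e - \ell + 1 = 45$, so your claim asserts $f^5 \notin \mfrm^{[25]}$; but by Frobenius $f^5 = x^{25}y^5 - x^5y^{25} \in (x^{25}, y^{25})$. In this example $g = y - y^5$, $g^5 = y^5 - y^{25}$, so $A = y^5$, $B = -1$ and $R = 1/y^5$, which is purely inseparable: every point of $\mathbb{P}^1$ has $e_P = 5$, so $\sum(e_P-1)$ is unbounded and the Riemann--Hurwitz count gives no contradiction.

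The problem is not just this example. If your asserted non-membership $f^n \notin \mfrm^{[p^e]}$ for $nd \leq 2p^e - \ell + 1$ held for all $e$, it would force $\fpt(X,D) = \lim_e \nu_e/p^e \geq 2/d = \lct(X,D)$, i.e.\ $\fpt = \lct$, which is exactly what the theorem is trying to bound \emph{away} from equality and which the example above (and \autoref{ex_sfr} of the paper, with $\fpt = 1/p < 2/(p+1) = \lct$) shows to be false. Restricting to $e = 1$ does restore separability, since $\deg R \leq nd - p \leq p - \ell + 1 < p$; but then your lower bound only directly gives $\fpt \geq \nu_1/p$ with $\nu_1 = \lfloor (2p - \ell + 1)/d\rfloor$, and the further step of converting this to the exact constant $(2p - \ell + 2)/(dp)$ is nontrivial and not addressed by the sketch. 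That additional analysis, with careful attention to the $p$th-power (inseparable) subtleties, is precisely the content of the cited references.
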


As an easy corollary we obtain the following.

\begin{corollary}
\label{cor.KLTImpliesFRegLineArrangementBasic}
Fix $X = \mathbb{A}^2$.
Suppose that $\Lambda$ is a set of rational numbers in $(0, 1]$ and $t_0$ is positive real constant such that for each integer $d > 2$, if $\lambda < {2 \over d}$ for some $\lambda \in \Lambda$, then ${2 \over d} - \lambda \geq t_0$.
Suppose
\[
p \geq {1 \over t_0}
\]
Then for any line arrangement $(X, D := \sum_{i = 1}^\ell a_i D_i)$ with integer $a_i$ so that for some $\lambda \in \Lambda$
we have $(X, \lambda D)$ is Kawamata log terminal, then $(X, \lambda D)$ is strongly $F$-regular.
\end{corollary}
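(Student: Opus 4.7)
The plan is to reduce strong $F$-regularity to the numerical condition $\lambda<\fpt(X,D)$ and then split into two cases, handled respectively by \autoref{lem.DegenCaseOfFPT} and \autoref{prop.BoundOnFPTLineArrangements}.

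\smallskip

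\textbf{Step 1 (numerical consequences of KLT).} Since $X=\mathbb{A}^2$ is smooth and the $D_i$ pass through the origin, KLT of $(X,\lambda D)$ controls two families of valuations: the generic points of each $D_i$, giving $\lambda a_i<1$, and the exceptional divisor $E$ of the blow-up of the origin, giving log discrepancy $2-\lambda d>0$, i.e.\ $\lambda<\tfrac{2}{d}$. In particular, the hypothesis on $\Lambda$ yields
\[
\tfrac{2}{d}-\lambda \;\geq\; t_0 \;\geq\; \tfrac{1}{p}.
\]

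\textbf{Step 2 (case split).} I would next split on whether some $a_i$ dominates.

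\emph{Case 1: $2a_i\geq d$ for some $i$.} By \autoref{lem.DegenCaseOfFPT}, $\fpt(X,D)=\tfrac{1}{a_i}$. KLT gives $\lambda a_i<1$, so $\lambda<\fpt(X,D)$.

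\emph{Case 2: $2a_i<d$ for all $i$.} A quick check (using $d=\sum a_i$ and $a_i\geq 1$) shows this forces $\ell\geq 3$ and $\ell\leq d$, so $0<\ell-2<d$. Then \autoref{prop.BoundOnFPTLineArrangements} gives
\[
\fpt(X,D)\;\geq\;\tfrac{2p-\ell+2}{dp}\;=\;\tfrac{2}{d}-\tfrac{\ell-2}{dp}.
\]
Since $\tfrac{\ell-2}{dp}<\tfrac{1}{p}\leq t_0\leq \tfrac{2}{d}-\lambda$, we obtain $\fpt(X,D)>\lambda$ in this case as well.

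\smallskip

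\textbf{Step 3 (from $\lambda<\fpt$ to strong $F$-regularity).} By definition of $\fpt$ as a supremum, the strict inequality $\lambda<\fpt(X,D)$ gives a rational $\mu\in(\lambda,\fpt(X,D))$ with $(X,\mu D)$ sharply $F$-pure; in particular $(X,\lambda D)$ is sharply $F$-pure (the inclusion $F^e_*\O_X(\lceil(p^e-1)\lambda D\rceil)\subseteq F^e_*\O_X(\lceil(p^e-1)\mu D\rceil)$ preserves the splitting map). Setting $C=D$ and $\varepsilon=\mu-\lambda>0$, the pair $(X,\lambda D+\varepsilon C)=(X,\mu D)$ is sharply $F$-pure, and outside $\Supp D$ the pair $(X,\lambda D)$ coincides with $(X,0)$ on the smooth variety $\mathbb{A}^2$, hence is strongly $F$-regular there. \autoref{l_sfr} then promotes $(X,\lambda D)$ from sharply $F$-pure to strongly $F$-regular.

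\smallskip

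\textbf{Main obstacle.} The only subtle point is the strict comparison in Case~2: one must carefully combine the arithmetic bound $\ell-2<d$ with the gap estimate $\tfrac{2}{d}-\lambda\geq t_0\geq\tfrac{1}{p}$ coming from the hypothesis on $\Lambda$. Everything else is a packaging of the known $F$-pure threshold bound of Hara--Monsky together with the standard passage from $\lambda<\fpt$ to strong $F$-regularity via \autoref{l_sfr}.
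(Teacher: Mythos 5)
Your proof is correct and follows essentially the same strategy as the paper's: reduce to showing $\lambda < \fpt(X,D)$, dispose of the case $2a_i \geq d$ via \autoref{lem.DegenCaseOfFPT}, and in the remaining case combine the Hara--Monsky bound from \autoref{prop.BoundOnFPTLineArrangements} with the gap hypothesis on $\Lambda$. The only (harmless) cosmetic differences are that you absorb $\ell \in \{1,2\}$ into Case 1 rather than dismissing them as SNC, and you spell out the passage from $\lambda < \fpt(X,D)$ to strong $F$-regularity via \autoref{l_sfr}, a step the paper treats as implicit.
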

Before proving this corollary we make two observations.  Firstly, we notice that $\Lambda$ is bounded away from zero by hypothesis (since ${2 \over d}$ converges to zero). Secondly, the condition on $\Lambda$ is satisfied for any ACC subset of $(0,1]$ which is also bounded away from zero.
\begin{proof}
Note that the cases of $\ell = 1, 2$ are uninteresting as then $(X, D)$ is SNC and so $(X, \lambda D)$ is Kawamata log terminal if and only if it is strongly $F$-regular.
We fix $D = \sum_{i = 1}^\ell a_i D_i$ with $d = \sum_{i = 1}^\ell a_i$ and fix $\lambda \in \Lambda$ such that $(X, \lambda D)$ is Kawamata log terminal.
We need to show that $\lambda < \fpt(X, D)$, we know that $\lambda < \lct(X, D)$.  Note that we may assume that $2 a_i < d = \sum_{i = 1}^\ell a_i$ for all $i$ since otherwise $\lct(X, D) = \fpt(X, D) = {1 \over a_i}$ by \autoref{lem.DegenCaseOfFPT}.  Thus we see that $\lct(X, D) = {2 \over d}$ so that ${2 \over d} - \lambda \geq t_0 \geq {1 \over p}$.

Now observe that
\[
\lambda \leq {2 \over d} - {1 \over p} \leq {2 \over d} - {\ell \over dp} = {2 p - \ell \over dp} < {2p - \ell + 2 \over dp} \leq \fpt(X, D) .
\]
where the last inequality comes from \autoref{prop.BoundOnFPTLineArrangements}.  This completes the proof.
\end{proof}


\begin{example}\label{ex_sfr}
We cannot weaken the hypothesis of \autoref{cor.KLTImpliesFRegLineArrangementBasic} to simply that $\Lambda$ is an ACC set.  For example, suppose that $\Lambda = \{ {1 \over n} \}_{n \in \bN}$.  Then for each prime integer $p$, set $D$ to be the set of $p+1$ distinct lines of $\bA^2_{\bF_p}$.  In other words, $D = \Div( x y (x + y) (x+ 2y) \dots (x + (p-1)y) )$. Then it is easy to check that $\fpt(X, D) = {1 \over p} < {2 \over p+1} = \lct(X, D)$.  In particular, for each prime $p$, $(X, {1\over p} D)$ is not strongly $F$-regular even though it is Kawamata log terminal.
\end{example}

\begin{example}[Standard coefficients]
\label{ex.StandardCoefficients}
Suppose that
$$\Lambda = D(\emptyset) = \{ {n - 1 \over n}\; |\; n \in \bN \}$$ is the set of standard coefficients.  Then the only element $\lambda \in \Lambda$ with $\lambda < 2/d$, for some $d > 2$ is $\lambda = {2 - 1 \over 2} = 1/2$, for $d = 3$.  In particular, we see that
\[
1/6 = 2/3 - 1/2 = \min\{ 2/d - \lambda > 0 \;|\; d > 2, \lambda \in \Lambda \}.
\]
It follows from \autoref{cor.KLTImpliesFRegLineArrangementBasic} that if $p > 6 = {1 \over 1/6 }$ and $\lambda \in \Lambda$, then $(X, \lambda D)$ is Kawamata log terminal if and only if it is strongly $F$-regular where $X = \bA^2$ and $D$ is a line arrangement.
\end{example}

\begin{example}[Simple hyperstandard coefficients]
\label{ex.HyperstandardCoefficients}
Fix a positive integer  $n$.
Suppose that
\[{\arraycolsep=1.4pt\def\arraystretch{1.5}
\begin{array}{rl}
\Lambda_n = & D(\big\{ {1 \over n} \big\})\\
 = & D(\big\{ {a \over n} \; |\; a = 0, \ldots n \big\}) \\
 = & \big\{ {nm + a - n \over nm} \;|\;m \in \bN, a = 0, \ldots n\big\}\\
= & \big\{0, {1 \over n}, {2 \over n}, \ldots, {n-1 \over n}, 1, {1 \over 2}, {n + 1 \over 2n}, \ldots, {2n - 1 \over 2n}, {2 \over 3}, {2n + 1 \over 3n}, \ldots, {3n - 1 \over 3n}, {3 \over 4}, \ldots  \big\}\\
\end{array}
}
\]
We want to find the minimum of ${2 \over d} - \lambda$, with $\lambda\in \Lambda_n$ and $d\in \mathbb N$ such that $d>2$ and ${2\over d-\lambda} >0$.
Note that there are only finitely many $\lambda \in \Lambda_n$ which are less than $2/d$ (and all of those are before the ${2 \over 3}$ in the list above), and we only must consider $d = 3, \ldots, 2n-1$.  Also note we may as well consider $n \geq 3$ since $n = 1, 2$ are already covered by \autoref{ex.StandardCoefficients}.

For $d = 3$, we see that
\[
\begin{array}{rcl}
m_3 = \min\big\{ {2 \over 3} - \lambda > 0\;|\; \lambda \in \Lambda_n \big\} = & \min\{ {2 \over 3} - {n + a \over 2n} > 0\;|\; a = 0, \ldots, n \} & \geq {1 \over 6n}.
\end{array}
\]
Likewise for each $2n-1 \geq d > 3$ we see that
\[
\begin{array}{c}
m_d = \min\big\{ {2 \over d} - \lambda > 0\;|\; \lambda \in \Lambda_n \big\} = \min\Big\{ {2 \over d} - {a \over n} > 0\;|\; a = 0, \ldots, n\Big\}  \geq {1 \over dn} \geq {1 \over (2n - 1) n}.
\end{array}
\]
We set $m = \min\{ m_3, m_4, \ldots, m_{2n - 1} \}$.  

Notice that
\[
{2 \over {2n - 1}} - {1 \over n} = {2n - (2n - 1) \over (2n - 1)n } = {1 \over (2n - 1) n}.
\]
It follows easily that $m = \min\big({1 \over (2n - 1)n}, m_3\big)$.  If $n \geq 4$ then $2n - 1 > 6$ in which case $m = {1 \over (2n - 1)n}$.  Finally, we consider $n = 3$ explicitly.  Note that for $m_3$ we are minimizing ${2 \over 3} - {3 + a \over 6}$ and we see that that is minimized when $a = 0$ in which case $m_3 = {1\over 6} \geq {1 \over (2n-1)n} = {1 \over 15}$.
\end{example}

\medskip

Combining this example with \autoref{cor.KLTImpliesFRegLineArrangementBasic} yields the following result.

\begin{corollary}
\label{cor.HyperstandardCoefficientsBound}
Suppose that $X = \bA^2$ and $D$ is a line arrangement through the origin.
If $n \geq 3$, $\Lambda = D(\{{1 \over n}\})$ and $p > 2n^2 - n$ then for any $\lambda \in \Lambda$, we have that $(X, \lambda D)$ is Kawamata log terminal if and only if it is strongly $F$-regular.
\end{corollary}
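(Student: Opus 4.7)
The statement is essentially a direct corollary of the two previous results: the computational Example \ref{ex.HyperstandardCoefficients} and the implication Corollary \ref{cor.KLTImpliesFRegLineArrangementBasic}. The plan is therefore to verify that the hypotheses of the latter are met, with $t_0$ chosen according to the former. One direction of the ``if and only if'' statement, namely that strong $F$-regularity implies Kawamata log terminality, is the general implication from \cite{HaraWatanabeFRegFPure} and requires no extra work. Only the converse direction needs attention.

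To invoke Corollary \ref{cor.KLTImpliesFRegLineArrangementBasic} with $\Lambda = \Lambda_n := D(\{1/n\})$, I must produce a positive constant $t_0$ such that, for every integer $d > 2$ and every $\lambda \in \Lambda_n$ with $\lambda < 2/d$, one has $2/d - \lambda \geq t_0$. This is precisely the quantity $m = \min\{m_3, m_4, \dots, m_{2n-1}\}$ computed in Example \ref{ex.HyperstandardCoefficients}. That calculation shows $m = 1/((2n-1)n) = 1/(2n^2 - n)$ for $n \geq 3$: for $n \geq 4$ the minimum is attained at $d = 2n-1$, $\lambda = 1/n$, since then $1/((2n-1)n) < 1/(6n) \leq m_3$, while for $n = 3$ one checks directly that $\min(1/15, 1/6) = 1/15 = 1/(2n^2 - n)$. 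Thus setting $t_0 := 1/(2n^2 - n)$ satisfies the required hypothesis.

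Under the assumption $p > 2n^2 - n$ we then have $p \geq 2n^2 - n + 1 \geq 1/t_0$, which is exactly the numerical condition of Corollary \ref{cor.KLTImpliesFRegLineArrangementBasic}. Applying that corollary to any line arrangement $D = \sum a_i D_i$ through the origin and any $\lambda \in \Lambda_n$ for which $(X, \lambda D)$ is Kawamata log terminal yields that $(X, \lambda D)$ is strongly $F$-regular, completing the reverse implication and hence the proof.

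Since all the real combinatorial work has already been absorbed into Example \ref{ex.HyperstandardCoefficients}, there is no substantial obstacle here: the argument is pure bookkeeping once the identification $t_0 = 1/(2n^2 - n)$ is made. The only minor subtlety worth flagging is the edge case $n = 3$, where the minimum witness $d = 2n-1 = 5$ is comparable in magnitude to $d = 3$ and one should explicitly verify that $1/15 < 1/6$ so that the formula $1/(2n^2-n)$ still governs the bound.
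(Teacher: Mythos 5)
Your proposal is correct and follows exactly the argument the paper intends: the corollary is stated immediately after Example~\ref{ex.HyperstandardCoefficients} with the line ``Combining this example with \autoref{cor.KLTImpliesFRegLineArrangementBasic} yields the following result,'' which is precisely what you do, identifying $t_0 = m = 1/((2n-1)n) = 1/(2n^2-n)$ from the example and checking $p > 2n^2 - n$ implies $p \geq 1/t_0$. The treatment of the $n=3$ edge case and the observation that the reverse implication (strongly $F$-regular $\Rightarrow$ klt) is unconditional are both correct and match the paper's intent.
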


For our application, we need to handle  the case when the coefficients of $D$ are rational numbers from $\Lambda$.  This can be more complicated as we shall see.

\begin{corollary}\label{cor.uniform for A2}
Fix $X=\mathbb A^2$. Let  $\varepsilon >0$ be a rational number  and let $\Gamma\subseteq (\varepsilon,1)\cap \mathbb Q$ be a subset which satisfies  ACC.

Then there exists a positive constant $p_0$ depending only on $\Gamma$ (and in particular on $\varepsilon$) such that if the line arrangement $(X,B:=\sum_{i=1}^\ell q_i D_i)$ is a Kawamata log terminal pair defined over a field of characteristic $p>p_0$ and such that $q_1,\dots,q_\ell\in\Gamma$, then $(X,B)$ is strongly $F$-regular.
\end{corollary}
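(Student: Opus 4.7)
The plan is to apply the quantitative lower bound on the $F$-pure threshold of plane line arrangements (\autoref{prop.BoundOnFPTLineArrangements}) together with the degenerate case \autoref{lem.DegenCaseOfFPT}, after first reducing to the integer-coefficient setting. Since strong $F$-regularity is local on $X$, and at any closed point where at most two of the $D_i$ meet the pair is simple normal crossings (so Kawamata log terminal automatically forces strongly $F$-regular, independently of $p$), it suffices, after translating, to work at the origin, assuming $D_1, \dots, D_\ell$ with $\ell \geq 3$ are precisely the lines of $B$ through the origin, and to verify sharp $F$-regularity there.

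Choose a common positive integer $N$ with $q_i = c_i / N$, $c_i \in \bN$, and set $D := \sum c_i D_i$, $d := \sum c_i = N \sum q_i$, so that $B = \tfrac{1}{N} D$. Then strong $F$-regularity of $(X, B)$ at the origin is equivalent to $1/N < \fpt(X, D)$. If $2 c_i \geq d$ for some $i$, then \autoref{lem.DegenCaseOfFPT} gives $\fpt(X, D) = \lct(X, D) = 1/c_i$, so Kawamata log terminality and strong $F$-regularity of $(X, B)$ at the origin both reduce to the hypothesis $q_i < 1$. Otherwise \autoref{prop.BoundOnFPTLineArrangements} gives $\fpt(X, D) \geq (2p - \ell + 2)/(dp)$, and the desired inequality $1/N < (2p - \ell + 2)/(dp)$ rearranges, after dividing by $N$, to
\[
p \cdot \bigl(2 - \sum_{i=1}^\ell q_i\bigr) > \ell - 2.
\]

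To make this inequality hold uniformly once $p$ is large, first note that Kawamata log terminality of $(X, B)$ at the origin yields $\sum q_i < 2$ (via a single blow-up of the origin), which combined with $q_i > \varepsilon$ bounds $\ell \leq M := \lceil 2/\varepsilon \rceil$. The key step is to show that the set
\[
S := \bigl\{ q_1 + \cdots + q_m : 1 \leq m \leq M,\; q_1,\dots,q_m \in \Gamma,\; q_1 + \cdots + q_m < 2 \bigr\}
\]
is itself ACC; this follows because a sumset $A + B$ of two ACC subsets of $\bR$ is ACC (dual to the classical fact that the sum of two well-ordered subsets of $\bR$ is well-ordered), and a finite union of ACC sets is ACC. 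Consequently $\sup S$ is attained in $S$, so $\delta := 2 - \sup S > 0$ depends only on $\Gamma$, and setting $p_0 := \lceil (M-2)/\delta \rceil$ gives $p(2 - \sum q_i) \geq p \delta > M - 2 \geq \ell - 2$ for all $p > p_0$. The main obstacle is precisely the uniform positivity of $\delta$; once this is in hand the remainder is a direct quantitative chase through the two cases of \autoref{prop.BoundOnFPTLineArrangements} and \autoref{lem.DegenCaseOfFPT}.
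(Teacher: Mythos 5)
Your proof is correct, but it takes a genuinely different route from the paper's. The paper proves \autoref{cor.uniform for A2} by contradiction: it extracts a sequence of counterexamples $(X, B_i)$ with $p_i \to \infty$, passes to a subsequence with constant $\ell$, invokes \autoref{cor.KLTImpliesFRegLineArrangementBasic} to dispose of the case where the set of occurring coefficients is finite, and then uses ACC of $\Gamma$ to pass to a further subsequence in which the total degree $q_i := \sum_j q_{i,j}$ is strictly decreasing; the chain of inequalities from \autoref{lem.DegenCaseOfFPT} and \autoref{prop.BoundOnFPTLineArrangements} then contradicts $p_i \to \infty$. You instead argue directly: you clear denominators, reduce the inequality $1/N < \fpt(X,D)$ to $p(2 - \sum q_i) > \ell - 2$, and then establish uniform positivity of $2 - \sum q_i$ via the lemma that a finite iterated sumset of an ACC set is ACC, so that $\sup S$ is attained and $\delta := 2 - \sup S > 0$ depends only on $\Gamma$. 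This yields an explicit $p_0 = \lceil (M-2)/\delta \rceil$. Your direct argument is in fact closer in spirit to the paper's \autoref{cor. Effective A2 finite}, which handles the finite-coefficient case with an explicit bound via $Q = \max \mathcal{S}$; what you do is show that the same maximum-of-sums mechanism works for an arbitrary ACC set bounded away from $0$ and $1$, whereas the paper keeps the non-effective ACC case (\autoref{cor.uniform for A2}) and the effective finite case (\autoref{cor. Effective A2 finite}) as separate statements with separate proofs. The tradeoff is that your argument is constructive and unifies both corollaries, at the cost of needing the ACC-sumset lemma, which the paper circumvents by the subsequence argument.

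One small point worth tightening: in the degenerate case $2c_i \geq d$ you say ``both reduce to the hypothesis $q_i < 1$''; more precisely, since $\Gamma \subseteq (\varepsilon, 1)$ forces $q_i < 1$ automatically, both KLT and strong $F$-regularity hold unconditionally there, independently of $p$, so this case cannot produce a counterexample. Also, the preliminary reduction to the origin is harmless but unnecessary: the standing conventions of Section 3 already take all $D_i$ to be lines through the origin, so the only non-SNC point is the origin.
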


\begin{proof}
Suppose not. Then, for any $i\in \mathbb N$ there exists a pair $$(X,B_i=\sum_{j=1}^{\ell_i} q_{i,j}D_{i,j})$$ defined over an algebraically closed field of  characteristic $p_i$, which is Kawamata log terminal but not strongly $F$-regular and such that $\lim p_i=\infty$ and $q_{i,j}\in \Gamma$.

Since $(X,B_{i})$ is Kawamata log terminal and $q_{i,j}>\varepsilon$, it follows that
$$\ell_{i} <\frac{2}{\varepsilon}.$$
Thus, after possibly taking a subsequence, we may assume that $\ell_i=\ell > 2$ is constant.
By  \autoref{cor.KLTImpliesFRegLineArrangementBasic}, we may assume that the set of coefficients
$$\{q_{i,j}\mid i\in \mathbb N,\quad j=1,\dots,\ell\}$$
is not finite and since $\Gamma$ satisfies ACC, after possibly taking a subsequence, we may assume that $q_i:=\sum_{j=1}^\ell q_{i,j}$ is a strictly decreasing sequence.  Note we may also assume that $q_i > 2q_{i,j}$ for all $i,j$.
We may write
$$B_{i}=\frac 1 {c_i} \sum_{j=1}^\ell b_{i,j} D_{i,j}$$
for some positive integers $c_i,b_{i,1},\dots, b_{i,\ell}$. Let $\lambda_i=\frac{1}{ c_i}$ and let $G_i=\sum_{j=1}^\ell b_{i,j} D_{i,j}$ so that $B_i=\lambda_i G_i$.  Let $d_i=\sum_{j=1}^\ell  b_{i,j}=c_i q_i$. Then by assumption, we have that for any $i\in \mathbb N$,
$$\fpt(X,G_i)\le \lambda_i <\lct(X,G_i).$$

Thus, by \autoref{lem.DegenCaseOfFPT} and \autoref{prop.BoundOnFPTLineArrangements}, we have
$$\frac{2p_i-\ell + 2}{d_ip_i}\le \lambda_i <\frac {2} {d_i}.$$
In particular,
$$1-\frac{\ell-2}{2p_i}=\frac{2p_i-\ell +2}{2 p_i}\le {d_i \lambda_i \over 2} = \frac{d_i}{2c_i}=\frac{q_i}2<1.$$
Since $\lim p_i=\infty$ and $q_i$ is a strictly decreasing sequence, we get a contradiction.
\end{proof}

For our purposes however, we need to handle sets of the form $D(I)$ because those sorts of set will occur via adjunction.

\begin{corollary} \label{cor. Effective A2 finite}
 Let $I\subseteq (0,1)\cap \mathbb Q$ be a finite subset and let $\Gamma=D(I)$.  Fix $\varepsilon=\min I\cup \{\frac 1 2\}$, and
$$\mathcal S=\{q=\sum_{i=1}^\ell q_i\mid q_i\in \Gamma \text{ such that }\sum_{i\neq j} q_i>1 \text{ for any $j=1,\dots\ell$}\}\cap (0,2).$$
Define $Q=\max \mathcal S$ and $p_0 = \lfloor \frac{1-\varepsilon}\varepsilon \cdot \frac 1 {1-Q/2}\rfloor$ and observe it depends only on $I$.

If the line arrangement $(X,B:=\sum_{i=1}^\ell q_i D_i)$ is a Kawamata log terminal pair defined over a field of characteristic $p>p_0$ and such that $q_1,\dots,q_\ell\in\Gamma$,
then $(X,B)$ is strongly $F$-regular.
\end{corollary}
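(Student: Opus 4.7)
The approach is a case split according to whether $\sum q_i$ lies in $\mathcal S$. Throughout, write $q_i=b_i/c$ with $b_i,c$ positive integers, set $G=\sum b_iD_i$ and $d=\sum b_i$ so that $B=\tfrac{1}{c}G$. Since $(X,B)$ is Kawamata log terminal we have $q_i<1$ and $\sum q_i<2$, and strong $F$-regularity of $(X,B)$ amounts to the inequality $\tfrac{1}{c}<\fpt(X,G)$. I expect the main obstacle to be matching the combinatorial bound on $\ell$ and $\sum q_i$ in the principal case with the explicit formula for $p_0$; the remaining configurations, where $\sum q_i$ is not controlled by $Q$, will have to be handled separately via $F$-adjunction.

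\emph{Case A: $\sum_{j\ne i}q_j>1$ for every $i$.} Then $\sum q_i\in\mathcal S$, so $\sum q_i\le Q$, and the hypothesis yields $q_i<1<\sum_{j\ne i}q_j$, hence $2b_i<d$; \autoref{prop.BoundOnFPTLineArrangements} then gives $\fpt(X,G)\ge(2p-\ell+2)/(dp)$. The condition $1/c<\fpt(X,G)$ reduces to $p>(\ell-2)/(2-\sum q_i)$. Since each $q_i\ge\varepsilon$ and $\sum q_i<2$, I get $\ell\le 2/\varepsilon$, so $\ell-2\le 2(1-\varepsilon)/\varepsilon$; together with $2-\sum q_i\ge 2-Q$ this produces
\[
\frac{\ell-2}{2-\sum q_i}\le\frac{2(1-\varepsilon)/\varepsilon}{2-Q}=\frac{1-\varepsilon}{\varepsilon(1-Q/2)},
\]
and therefore $p>p_0$ suffices.

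\emph{Case B: $\sum_{j\ne i_0}q_j\le 1$ for some $i_0$.} I apply $F$-adjunction together with \autoref{l_sfr}. Since $X$ and $D_{i_0}$ are smooth and the remaining lines meet $D_{i_0}$ only at the origin, adjunction of $(X,D_{i_0}+\sum_{j\ne i_0}q_jD_j)$ along $D_{i_0}\cong\bA^1$ produces $(\bA^1,cO)$ with $c=\sum_{j\ne i_0}q_j\le 1$, which is sharply $F$-pure; consequently the enlarged pair is sharply $F$-pure, and since $q_{i_0}<1$ we have $B\le D_{i_0}+\sum_{j\ne i_0}q_jD_j$, so $(X,B)$ itself is sharply $F$-pure. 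Away from $D_{i_0}$ the pair $(X,B)$ is simple normal crossings, since the lines $D_j$ for $j\ne i_0$ meet one another only at the origin, which lies on $D_{i_0}$, and therefore strongly $F$-regular there. Applying \autoref{l_sfr} with $C=D_{i_0}$ and $\delta=1-q_{i_0}>0$, for which $B+\delta D_{i_0}$ coincides with the sharply $F$-pure enlarged pair, then forces $(X,B)$ to be strongly $F$-regular.
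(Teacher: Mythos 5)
Your proof is correct and follows the same two-case strategy as the paper: the case where $\sum_{j\ne i_0} q_j \le 1$ for some $i_0$ is handled via $F$-adjunction along $D_{i_0}$ together with \autoref{l_sfr}, and the remaining case is handled via \autoref{prop.BoundOnFPTLineArrangements} combined with the combinatorial bounds $\ell < 2/\varepsilon$ and $\sum q_i \le Q$. The only cosmetic differences are that you argue directly that $p>p_0$ suffices rather than by contradiction, and that you verify $2b_i<d$ explicitly in Case~A so that only \autoref{prop.BoundOnFPTLineArrangements} is needed.
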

Note that although $\mathcal S$ is not finite, it is easy to check that since $I$ is finite, the maximum of $\mathcal S$ is attained and in particular $Q<2$.

\begin{proof}
Assume that $(X,B=\sum_{i=1}^\ell q_iD_i)$ is a Kawamata log terminal pair which is not strongly $F$-regular and such that $q_i\in \Gamma$.

Assume first that there exists $j \in \{1,\dots,\ell\}$ such that $\sum_{i\neq j} q_i\le 1$. Let
$$C=D_j+\sum_{i\neq j} q_i D_i.$$
Then $C\ge B$ and $(D_j,(C-D_j)|_{D_j})=(\mathbb A^1,\sum_{i\neq j}q_i O)$ where $O\in \mathbb A^1$ is the origin. In particular, $(\mathbb A^1,\sum_{i\neq j}q_i O)$ is sharply $F$-pure and by $F$-adjunction it follows that $(X,C)$ is also sharply $F$-pure. By  \autoref{l_sfr}, it follows that $(X,B)$ is strongly $F$-regular, a contradiction.

We now assume that $\sum_{i\neq j} q_i>1$ for any $j \in \{1,\dots,\ell\}$. Let $q=\sum_{j=1}^\ell q_i$.
Since $(X,B)$ is Kawamata log terminal, we have that $q<2$ and  $\ell<\frac 2 \varepsilon$.  In particular, $q\in \mathcal S$ and therefore $q\le Q$.

We may
write
$$B=\frac 1 c\sum_{i=1}^{\ell} b_i D_i$$ 
for some positive integers $c,b_1,\dots,b_\ell$. Let $\lambda =\frac {1}{c}$ and let $G=\sum_{j=1}^\ell b_i D_i$ so that
$B=\lambda G$. Let $d=\sum_{j=1}^\ell b_i = cq$.

Thus, by \autoref{lem.DegenCaseOfFPT} and \autoref{prop.BoundOnFPTLineArrangements}, we have
$$\frac{2p-\ell + 2}{dp}\le \lambda <\frac {2} {d}.$$
In particular,
$$1-\frac{\ell-2}{2p}=\frac{2p-\ell +2}{2 p}\le \frac{d}{2c}=\frac{q}2<1$$
and it follows
$$p\le {1 \over 2} \Big(\frac {\ell -2}{1-q/2}\Big)<\frac {\frac 2 \varepsilon-2}{2-Q}.$$
Thus, the claim follows.
\end{proof}

\begin{remark}Note that while it might be natural to require that $I$ is an arbitrary DCC set (since if $I$ is finite, $D(I)$ is DCC), there are difficulties with this assumption.  For instance if $I$ is a DCC set containing $\{{2 \over 3} - {1 \over n}\}$ then it is not difficult to see that the conclusion of \autoref{cor.uniform for A2} fails to hold for a configuration of three lines (since if $p \equiv 2 \mod 3$, then $\fpt < \lct = {2 \over 3}$).
\end{remark}

The bound of \autoref{cor. Effective A2 finite} is far from sharp as the following example shows.  Thus we hope that the bound can be substantially improved.

\begin{example}
Suppose that $I = \emptyset$.  Then it is not difficult to see that $\varepsilon = {1 \over 2}$ and $Q = \max \mathcal{S} = {1 \over 2} + {2 \over 3} + {4 \over 5} = 59/30$.  Hence we can take $p_0 = 30$.  This is far from optimal, see \cite[Section 3]{HaconXuThreeDimensionalMMP}
\end{example}

\begin{example}
Next suppose that $I = \{ {1 \over 3} \}$.  We see that $\varepsilon = {1 \over 3}$.  We consider three values for $\ell = 3,4, 5$ (notice $\ell < {2 \over 1/3} = 6$).  Note that the list of valid $q_i$ as in \autoref{ex.HyperstandardCoefficients} is
\[
\begin{array}{c}
 \{ 1, {1 \over 2}, {1 \over 3}, {2 \over 3}, {4 \over 6}, {5 \over 6}, {6 \over 9}, {7 \over 9}, {8 \over 9}, {3 \over 4}, {10 \over 12}, {11 \over 12}, {4 \over 5}, {13 \over 15}, {14 \over 15}, {5 \over 6}, {16 \over 18}, {17 \over 18}, {6 \over 7}, {19 \over 21}, {20 \over 21}, {7 \over 8}, {22 \over 24}, {23 \over 24}, {8 \over 9},\ldots \}\\[6pt] 
= \{ 1, {1 \over 2}, {1 \over 3}, {2 \over 3}, {5 \over 6}, {7 \over 9}, {8 \over 9}, {3 \over 4}, {11 \over 12}, {4 \over 5}, {13 \over 15}, {14 \over 15}, {17 \over 18}, {6 \over 7}, {19 \over 21}, {20 \over 21}, {7 \over 8}, {23 \over 24}, {25 \over 27}, {26 \over 27}, \ldots \}
\end{array}
\]

First we consider $\ell = 5$.  Then we see immediately that any $q_i < {2 \over 3}$ (since the minimum value of the other $q_j$ is ${1 \over 3}$).  But the only such elements of $D(I)$ are $\{ {1 \over 3}, {1 \over 2} \}$ and the ${Q}$ associated to any such sum is even smaller than the $I = \emptyset$ case.

Next we consider $\ell = 4$.  Note we cannot have three of the $q_i$ equalling ${1 \over 3}$ since then their sum would be $\leq 1$.  Hence at most two of the $q_i$ are ${1 \over 3}$ and so since the smallest value a third $q_i$ can be is ${1 \over 2}$, we see that any such $q_i < 2 - ({ 1 \over 3} + {1 \over 3} + {1 \over 2}) = {5 \over 6}$.  The only values of $D(I)$ less than ${5 \over 6}$ are $\{ {1 \over 3}, {1 \over 2}, {2 \over 3}, {7 \over 9}, {3 \over 4}, {4 \over 5} \}$.  A quick computation shows that ${Q} = {1 \over 3} + {1 \over 3} + {1 \over 2} + {4 \over 5} = {59 \over 30}$ again and we recover nothing new compared to the $I = \emptyset$ case.

Finally we consider $\ell = 3$.  Suppose now that $q_1 \leq q_2 \leq q_3$.  Of course, at least one of the $q_i < {2 \over 3}$ since their sum is less than $2$ and so either $q_1 = {1 \over 3}$ or $q_1 = {1 \over 2}$.

Suppose first that $q_1 = {1 \over 3}$.  It follows that $q_2 < {5 \over 6}$ and also since $q_1 + q_2 > 1$ we know that $q_2 \in \{ {7 \over 9}, {3 \over 4}, {4 \over 5} \}$.  We break this up into cases.
\begin{description}
\item[$q_1 = {1 \over 3}, q_2 = {7 \over 9}$] and so $q_1 + q_2 = {10 \over 9}$ and $q_3 < {8 \over 9}$.  The largest possible $p_3$ is then ${13 \over 15}$.  We see that $Q \geq {1 \over 3} + {7 \over 9} + {13 \over 15} = {89 \over 45}$.
\item[$q_1 = {1 \over 3}, q_2 = {3 \over 4}$] and so $q_1 + q_2 = {13 \over 12}$ and $q_3 < {11 \over 12}$.  The largest possible $q_3$ is then ${19 \over 21}$.  We see that $Q \geq {1 \over 3} + {3 \over 4} + {19 \over 21} = {167 \over 84}$.
\item[$q_1 = {1 \over 3}, q_2 = {4 \over 5}$] and so $q_1 + q_2 = {17 \over 15}$ and $q_3 < {13 \over 15}$.  The largest possible $q_3$ is then ${6 \over 7}$.  We see that $Q \geq {1 \over 3} + {4 \over 5} + {6 \over 7} = {209 \over 105}$.
\end{description}
Now we handle the case that $q_1 = {1 \over 2}$.  It follows that $q_2 < {3 \over 4}$ and also since $q_1 + q_2 > 1$, we know that $q_2 \in \{ {2 \over 3} \}$.  Hence we see that $q_1 + q_2 = {7 \over 6}$ and so $q_3 < {5 \over 6}$.  The largest possible $q_3$ is then ${4 \over 5}$ and we see that $Q \geq {1 \over 2} + {2 \over 3} + {4 \over 5} = {29 \over 30}$.

Summarizing, we see that $Q = {209 \over 105}$.  Since $\varepsilon = {1 \over 3}$ we see that
\[
p_0 = \Big({ { 1 - (1/3) \over 1/3}\Big) \Big( {1 \over 1 - 209/210}\Big) } = 2 \cdot 210 = 420.
\]
This completes the example.
\end{example}

%



Removing the explicit bound of \autoref{thm.global to local} we can restate \autoref{cor. Effective A2 finite} as follows.

\begin{corollary} \label{cor. A2 finite}
 Let $I\subseteq (0,1)\cap \mathbb Q$ be a finite subset and let $\Gamma=D(I)$.

Then there exists a positive constant $p_0$ depending only on $\Gamma$ such that if the line arrangement $(X:=\mathbb A^2,B:=\sum_{i=1}^\ell q_i D_i)$ is a Kawamata log terminal pair defined over a field of characteristic $p>p_0$ and
$q_1,\dots,q_\ell\in\Gamma$, then $(X,B)$ is strongly $F$-regular.
\end{corollary}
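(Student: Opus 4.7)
The plan is to derive this statement as a direct and essentially trivial consequence of \autoref{cor. Effective A2 finite}, which was just established. Indeed, the hypotheses of the two statements are identical: $(X,B)$ is a line arrangement in $\bA^2$, the pair is Kawamata log terminal, and the coefficients of $B$ lie in $\Gamma = D(I)$ for a finite set $I \subseteq (0,1)\cap \mathbb Q$. The only difference is that \autoref{cor. A2 finite} asserts the mere existence of a bound $p_0$ depending on $\Gamma$, whereas \autoref{cor. Effective A2 finite} exhibits one explicitly.

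Accordingly, the first step is to set $\varepsilon = \min(I \cup \{\tfrac{1}{2}\})$ and to form the set $\mathcal S$ described in \autoref{cor. Effective A2 finite}. The second step is to take $p_0 = \lfloor \tfrac{1-\varepsilon}{\varepsilon}\cdot \tfrac{1}{1-Q/2}\rfloor$, where $Q = \max \mathcal S$; as noted in the remark immediately following \autoref{cor. Effective A2 finite}, the finiteness of $I$ ensures that $Q$ is attained and satisfies $Q < 2$, so this $p_0$ is a well-defined positive integer. Since $\varepsilon$ and $Q$ are manifestly determined by $I$ alone (equivalently, by $\Gamma$), so is $p_0$. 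The third step is simply to invoke \autoref{cor. Effective A2 finite} with this choice of $p_0$ to conclude that $(X,B)$ is strongly $F$-regular whenever the characteristic exceeds $p_0$.

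I do not anticipate any obstacle in this argument; it is a pure restatement for the sake of cleaner citation in later sections, stripping off the explicit numerical bound. A purely non-effective alternative route — by contradiction, extracting from a hypothetical sequence of counterexamples a subsequence with fixed number of lines $\ell < 2/\varepsilon$ and then applying \autoref{prop.BoundOnFPTLineArrangements} together with \autoref{lem.DegenCaseOfFPT} — would also work, but it merely reproduces the estimate already isolated in \autoref{cor. Effective A2 finite}, so there is no gain in following it.
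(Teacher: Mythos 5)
Your proposal is correct and coincides with the paper's own treatment: the paper introduces \autoref{cor. A2 finite} with the one-line remark that it is just \autoref{cor. Effective A2 finite} with the explicit bound stripped away, giving no further argument. Your identification of the non-effective contradiction argument as a redundant alternative is also consistent with the paper, which uses that style of argument only in the earlier \autoref{cor.uniform for A2} for ACC coefficient sets.
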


\section{Global to local}\label{s_global}
The aim of this section is to prove \autoref{thm.global to local}. We begin with the following:

 \begin{corollary}
 \label{cor. P1 case}
Let  $I\subseteq (0,1)\cap \mathbb Q$ be a finite subset and let $\Gamma=D(I)$.

Then there exists a positive constant $p_0$ depending only on $I$  such that if $(\mathbb{P}^1,B:=\sum_{i=1}^\ell q_i P_i)$ is a log Fano pair defined over an algebraically closed field of characteristic $p>p_0$ and such that  the coefficients of $B$ belong to $\Gamma$, then $(\mathbb{P}^1,B)$ is globally $F$-regular.\end{corollary}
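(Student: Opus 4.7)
The plan is to reduce this statement to the line arrangement case on $\mathbb{A}^2$ via the affine cone construction of \autoref{prop.cone}. Take $L = \mathcal{O}_{\mathbb{P}^1}(1)$, which is ample. Since $(\mathbb{P}^1,B)$ is log Fano, $-(K_{\mathbb{P}^1}+B)$ is ample, hence $r(K_{\mathbb{P}^1}+B) \sim_{\mathbb{Q}} L$ for some (negative) rational $r$, so the $\mathbb{Q}$-Cartier hypothesis of \autoref{prop.cone} is satisfied. Form the affine cone
\[
W = \Spec R(\mathbb{P}^1,L) = \Spec k[x,y] = \mathbb{A}^2,
\]
so that the cone construction identifies each point $P_i \in \mathbb{P}^1$ with a line $D_i \subset \mathbb{A}^2$ through the origin, and the associated cone divisor is $B_W = \sum_{i=1}^\ell q_i D_i$.

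By \autoref{prop.cone}(1), $(W, B_W) = (\mathbb{A}^2, \sum q_i D_i)$ is Kawamata log terminal, and by hypothesis its coefficients $q_i$ lie in $\Gamma = D(I)$. I would then apply \autoref{cor. A2 finite} to the line arrangement $(\mathbb{A}^2, \sum q_i D_i)$: this yields a constant $p_0$, depending only on $I$, such that if $\charact k = p > p_0$, the pair $(\mathbb{A}^2, \sum q_i D_i)$ is strongly $F$-regular.

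Finally, by \autoref{prop.cone}(2), the strong $F$-regularity of the cone $(W,B_W)$ is equivalent to the global $F$-regularity of $(\mathbb{P}^1, B)$, so this gives the desired conclusion with the same $p_0$. The only real points to verify carefully are that the cone over $\mathbb{P}^1$ with respect to $\mathcal{O}(1)$ is genuinely $\mathbb{A}^2$ (so that the earlier line-arrangement bounds apply verbatim) and that a point $P_i$ of coefficient $q_i$ pulls back to a line $D_i$ with the same coefficient $q_i$ in $B_W$; both are standard. No step is really difficult in this argument — the heavy lifting has already been done in \autoref{cor. A2 finite}.
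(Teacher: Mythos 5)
Your proof is correct and follows the same route as the paper: form the affine cone over $\mathbb{P}^1$ with respect to $\mathcal{O}_{\mathbb{P}^1}(1)$, identify the cone with $\mathbb{A}^2$ and the points $P_i$ with lines $D_i$, use \autoref{prop.cone} to translate log Fano/globally $F$-regular into Kawamata log terminal/strongly $F$-regular for the cone pair, and then invoke \autoref{cor. A2 finite}. The paper's proof is just a terser version of exactly this argument.
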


\begin{proof} By considering the cone associated to $\mathcal L=\mathcal O_{\mathbb P^1}(1)$, \autoref{prop.cone} implies that
$(\mathbb{P}^1,\sum_{i=1}^\ell q_i P_i)$ is globally $F$-regular (resp. log Fano) if and only if $(\mathbb A^2, \sum_{i=1}^\ell q_i D_i)$ is strongly $F$-regular (resp. Kawamata log terminal),
where  $D_1,\dots, D_\ell$ are some suitable distinct lines
through the origin.

 Thus,  \autoref{cor. A2 finite} implies the claim.  In fact we can even choose $p_0$ effectively using the bound of \autoref{cor. Effective A2 finite}.
\end{proof}

We can now proceed with the proof of \autoref{thm.global to local}.

\begin{proof}[Proof of \autoref{thm.global to local}]
Let $B$ be a $\mathbb{Q}$-divisor such that $(X,B)$ is Kawamata log terminal. We may assume that $X$ is affine and let $q\in X$ be a closed point. By  \autoref{kollar's comp}, we can take a birational morphism $f:Y \to X$ such that
\begin{itemize}
\item[(1)]$f$ is isomorphic over $X \setminus \{q\}$,
\item[(2)]$E=f^{-1}(q)$  is irreducible,
\item[(3)]$-(K_Y+B_Y+E)$ is $f$-ample, where $B_Y=f^{-1}_*B$,
\item[(4)] $(Y, B_Y+E )$ is purely log terminal.
 \end{itemize}

In particular, if we write $K_E+B_E=(K_Y+B_Y+E)|_E$ then $(E,B_E)$ is Kawamata log terminal and by  \autoref{l_adjunction}  and \autoref{l_di}, the coefficients of $B_E$ belong to $D(I)$.
Thus,  \autoref{cor. P1 case} implies that there exists a positive integer $p_0$  such that $(E,B_E)$ is globally $F$-regular, if $p>p_0$.
Then   \autoref{prop.GlobInvOfAdj} implies that $(Y, B_Y)$ is globally $F$-regular over $X$.
Thus, by   \autoref{prop.stronglyFregular}, $(X,B)$ is strongly  $F$-regular.
\end{proof}



\section{On the existence of flips}\label{s_flips}

This section is highly inspired by Section 3 in \cite{HaconXuThreeDimensionalMMP}. In particular, our goal is to extend \cite[Theorem 3.1]{HaconXuThreeDimensionalMMP} by the following:

\begin{theorem}\label{t_globallyFregular}
Let $I\subseteq (0,1)$ be a finite set. Then there exists a prime $p_I$, depending only on $I$, such that if  $(S,B)$ is a pair over an algebraically closed field of characteristic $p>p_I$, $f\colon S\to T$ is a morphism of normal surfaces, such that
\begin{enumerate}
\item $(S,B)$ is Kawamata log terminal,
\item $-(K_S+B)$ is $f$-nef, and
\item the coefficients of $B$ belong to $I$,
\end{enumerate}
then $(S,B)$ is globally $F$-regular over $T$.
\end{theorem}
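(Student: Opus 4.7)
The plan is to follow the strategy used in the proof of \autoref{thm.global to local} and in \cite[Theorem 3.1]{HaconXuThreeDimensionalMMP}: extract a Koll\'ar component to reduce global $F$-regularity of $(S,B)$ over $T$ to that of a log Fano pair on $\mathbb{P}^1$ whose coefficients lie in a finite set depending only on $I$, and then appeal to \autoref{cor. P1 case}. The novelty relative to \cite{HaconXuThreeDimensionalMMP} is that we allow arbitrary finite coefficient sets, and this is precisely what \autoref{cor. P1 case} accommodates.

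To set up the reduction, I would first pass to the case where $T$ is affine, since global $F$-regularity over $T$ is local on $T$. By \autoref{t_complement}, there exist $N = N(I)$ and an effective $m$-complement $B^c \ge B$ over $T$ with $m \le N$, so that $(S, B^c)$ is log canonical but not Kawamata log terminal; in particular, the coefficients of $B^c$ lie in a finite set depending only on $I$. I would then apply \autoref{kollar's comp} at a closed point $q$ of $S$ contained in the non-KLT locus of $(S,B^c)$---using the divisor $B^c - B$ in the role of the auxiliary $H$ from the proof of \loccit---to obtain a birational morphism $g\colon S' \to S$ extracting a single prime exceptional divisor $E$ above $q$, with $(S', B_{S'} + E)$ purely log terminal (where $B_{S'} = g^{-1}_*B$) and $-(K_{S'} + B_{S'} + E)$ $g$-ample. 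Since the extraction is plt and $E$ is contracted to a point on a surface, $E \cong \mathbb{P}^1$; moreover the adjunction pair $(E, B_E)$ defined by $K_E + B_E = (K_{S'} + B_{S'} + E)|_E$ is Kawamata log terminal with $-(K_E + B_E)$ ample, hence log Fano. By \autoref{l_adjunction} and \autoref{l_di} the coefficients of $B_E$ belong to $D(I)$, so \autoref{cor. P1 case} yields $p_0 = p_0(I)$ such that $(E, B_E)$ is globally $F$-regular whenever $p > p_0$.

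To conclude, I would apply \autoref{prop.GlobInvOfAdj} to the composition $h := f\circ g\colon S' \to T$, with $\Delta = B_{S'} + E$ and boundary $B_{S'}$, to obtain that $(S', B_{S'})$ is globally $F$-regular over $T$; pushing forward by $g$, using $g_*\mathcal{O}_{S'} = \mathcal{O}_S$ and $g_* B_{S'} = B$, then gives global $F$-regularity of $(S, B)$ over $T$. The main obstacle, in my estimation, is verifying the $h$-ampleness of $-(K_{S'} + B_{S'} + E)$ needed in \autoref{prop.GlobInvOfAdj}: the Koll\'ar construction provides only $g$-ampleness, while the complement relation $K_S + B^c \sim_{f,\mathbb{Q}} 0$ combined with $f$-nefness of $-(K_S+B)$ yields $-(K_{S'} + B_{S'} + E) \sim_{h,\mathbb{Q}} H_{S'}$, the strict transform of the $f$-nef---but not $f$-ample---divisor $H := B^c - B$. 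I would handle this either by a standard perturbation argument, or by running the Kawamata--Viehweg-type vanishing used in the proof of \autoref{prop.GlobInvOfAdj} directly via the relative vanishing statement for birational morphisms of surfaces \cite[Lemma 2.23]{HaconXuThreeDimensionalMMP} applied to $g$, combined with the $h$-semipositivity of $H_{S'}$. Once the requisite $H^0$-surjectivity is obtained, the splitting argument of \autoref{prop.GlobInvOfAdj} runs without change.
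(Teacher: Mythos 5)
Your reduction to \autoref{cor. P1 case} via \autoref{prop.GlobInvOfAdj} is the right overall shape, and you correctly identify the crux: you need nefness/ampleness over $T$, not merely over $S$. But the proposed fixes do not close the gap, and the paper's actual proof is structurally different precisely because this issue cannot be sidestepped.

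The concrete problem is the following. If you extract a Koll\'ar component $g\colon S'\to S$ and set $h = f\circ g$, then for any $h$-exceptional curve $C\neq E$ on $S'$ one has
\[
-(K_{S'}+B_{S'}+E)\cdot C \;=\; -(K_S+B)\cdot g_*C \;-\;(1-a)\,E\cdot C,
\]
where $1-a>0$ and $E\cdot C\ge 0$. The first term is $\ge 0$ by $f$-nefness, but the second term is $\le 0$, so there is no a priori bound. Writing $-(K_{S'}+B_{S'}+E)\sim_{\mathbb Q,T}H_{S'}$ with $H=B^c-B$ does not help either: you call $H_{S'}$ ``$h$-semipositive,'' but strict transform does not preserve nefness. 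Since $g^*H = H_{S'}+mE$ with $m>0$ in general (the complement has a non-klt point over $q$, so $H$ passes through $q$), we get $H_{S'}\cdot C = g^*H\cdot C - m\,E\cdot C$, which can be negative. So $-(K_{S'}+B_{S'}+E)$ need not be $h$-nef, and the relative Kawamata--Viehweg vanishing you invoke has no reason to apply along $h$. A ``standard perturbation'' also does not exist: to make $-(K_{S'}+B_{S'}'+E)$ $h$-ample while keeping $B_{S'}'\ge B_{S'}$, keeping coefficients in a controlled finite set, and keeping the pair plt is exactly the non-trivial content that needs an argument, not an appeal to genericity. (A secondary issue: using $H=B^c-B$ directly as the auxiliary divisor in \autoref{kollar's comp} is not legitimate, since $(S,B^c)$ may have several log canonical places over $q$; the construction of \autoref{kollar's comp} perturbs $H$ precisely to make the lc place unique, after which your identification of $-(K_{S'}+B_{S'}+E)$ with the strict transform of $B^c-B$ over $T$ is no longer available.)

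The paper avoids this by working on a smooth dlt model $\nu\colon\tilde S\to S$ of $(S,B^c)$ rather than on a Koll\'ar extraction, and the heart of the proof is \autoref{claim_globallyFregular}: one constructs a boundary $B^*_{\tilde S}$ with $B_{\tilde S}\le B^*_{\tilde S}\le B^c_{\tilde S}$ and $\lfloor B^*_{\tilde S}\rfloor = C$, whose coefficients lie in a finite set $J=J(I)$ and for which $-(K_{\tilde S}+B^*_{\tilde S})$ is nef over $T$ and ample on $C$. Achieving the nefness over $T$ is done by a direct intersection-theoretic computation along the chain of exceptional rational curves emanating from $C$, replacing coefficients $p/q$ by $(p-x)/(q-x)$ for a small $x$ chosen uniformly in terms of $I$ and $N$; the choice of $x$ is constrained by both the nefness computation and the requirement $B_{\tilde S}\le B^*_{\tilde S}$, which uses that coefficients of $B$ avoid the intervals $\big(\tfrac{p-x}{q-x},\tfrac{p}{q}\big)$. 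None of this is captured by your outline; it is exactly the missing step, and without it the application of \autoref{prop.GlobInvOfAdj} (or of the surjectivity argument inside its proof) is unjustified.
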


Following the same steps as in \cite[\S 4.4]{HaconXuThreeDimensionalMMP}, it is possible to show that \autoref{t_globallyFregular} implies \autoref{t_flips}.
\medskip

We now begin the proof of \autoref{t_globallyFregular}. Let $B^c\ge B$ and $N$ as in \autoref{t_complement} and let $\nu\colon \tilde S\to S$ be a smooth dlt model of $(S,B^c)$. We may write
$$K_{\tilde S}+B_{\tilde S}^c=\nu^*(K_S+B^c)\qquad\text{and}\qquad K_{\tilde S}+B_{\tilde S} =\nu^*(K_S+B),$$
where $(S,B_{\tilde S}^c)$ is divisorially log terminal and $B_{\tilde S}\le B^c_{\tilde S}$. Note that $B_{\tilde S}$ is not necessarily effective. By \autoref{t_complement}, it follows that $C=\lfloor B_{\tilde S}^c\rfloor$ is not zero.
We first assume that $(\tilde S, B^c_{\tilde S})$ is purely log terminal.

\begin{claim}\label{claim_globallyFregular}
There exist a finite subset $J\subseteq [0,1]$ depending only on $I$ and a $\mathbb Q$-divisor $B^*_{\tilde S}\ge 0$ such that
\begin{enumerate}
\item[(1)] $B_{\tilde S}\le B^*_{\tilde S}\le B^c_{\tilde S}$
\item[(2)] the coefficients of $B^*_{\tilde S}$ belong to $J$,
\item[(3)] $\lfloor B^*_{\tilde S}\rfloor=C$, and
\item[(4)] $-(K_{\tilde S}+ B^*_{\tilde S})$ is nef over $T$ and $-(K_{\tilde S}+ B^*_{\tilde S})|_C$ is ample.
\end{enumerate}
\end{claim}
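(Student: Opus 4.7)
I would construct $B^*_{\tilde S}$ by interpolating between $B_{\tilde S}$ and $B^c_{\tilde S}$ only on the non-floor components, keeping coefficient~$1$ on $C$. Setting $F:=B^c_{\tilde S}-C$ and writing $B_{\tilde S}=B_{\tilde S}|_C+B_{\tilde S}^{\text{off}}$ for the splitting of $B_{\tilde S}$ into its $C$-part and off-$C$-part, define
\[
B^*_{\tilde S} \;:=\; C \;+\; \alpha\,B_{\tilde S}^{\text{off}} \;+\; (1-\alpha)\,F
\]
for a small rational $\alpha\in(0,1)$ to be fixed from a finite set depending only on~$I$.

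Since $(\tilde S,B^c_{\tilde S})$ is a plt smooth dlt model of $(S,B^c)$, every $\nu$-exceptional divisor is an lc place of $(S,B^c)$ and therefore appears in $C$ with coefficient~$1$. Hence $F$ is supported on strict transforms of components of $B^c$ of coefficient strictly less than~$1$, and because $B^c$ is an $N$-complement with $N$ controlled by $I$ (\autoref{t_complement}), the coefficients of $F$ lie in a finite subset $J_0\subset[0,1)\cap\bQ$ depending only on~$I$. Conditions~(1) and~(3) are then immediate: on each component of $C$ the coefficient of $B^*_{\tilde S}$ equals~$1$, and on each component $D$ of $F$ it equals $\alpha\,a_D+(1-\alpha)\,a^c_D\in[a_D,a^c_D]$, where $a_D\in I\cup\{0\}$ and $a^c_D\in J_0$ denote the coefficients of $D$ in $B_{\tilde S}$ and $B^c_{\tilde S}$. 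Condition~(2) then holds with $J=\{1\}\cup\{\alpha a+(1-\alpha)a^c:a\in I\cup\{0\},\,a^c\in J_0\}$, finite once $\alpha$ is fixed.

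For~(4), decompose $\nu^*(B^c-B)=G_C+G_F$ into its $C$- and off-$C$-parts. Using $\nu^*(-(K_S+B^c))\equiv_T 0$ (since $N(K_S+B^c)\sim_T 0$), a direct computation yields
\[
-(K_{\tilde S}+B^*_{\tilde S}) \;=\; \nu^*\bigl(-(K_S+B^c)\bigr)+\alpha\,G_F \;\equiv_T\; \alpha\,G_F.
\]
For each component $C_j\subset C$, the projection formula gives $\nu^*(B^c-B)\cdot C_j=(B^c-B)\cdot\nu_*C_j\ge 0$ (zero if $C_j$ is $\nu$-exceptional, otherwise nonnegative by $f$-nefness of $B^c-B$), while $-G_C\cdot C_j=-g_j\,C_j^2>0$ for the $\nu$-exceptional components (with $g_j=a(C_j,S,B)>0$ by klt of $(S,B)$ and $C_j^2<0$ by exceptionality). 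This forces $G_F\cdot C_j>0$, giving ampleness on~$C$.

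The main obstacle, as I see it, is nefness of $\alpha G_F$ over $T$: one must show $G_F\cdot D\ge 0$ for every $g$-contracted curve $D\not\subset C$, where $g=f\circ\nu$. If $D\not\subset\Supp F$ this is automatic, since $G_F$ is effective and distinct irreducible curves on the smooth surface $\tilde S$ meet nonnegatively. The delicate case is $D$ itself a component of $F$, whose own self-intersection $D^2\le 0$ contributes negatively to $G_F\cdot D$. Here I would use the identity $G_F\cdot D=(B^c-B)\cdot\nu_*D-G_C\cdot D$: the first term is $\ge 0$ by $f$-nefness of $B^c-B$ on $S$, while $G_C\cdot D$ can be controlled by a local multiplicity computation at each lc centre of $(S,B^c)$ meeting $\nu(D)$, yielding the required inequality after shrinking $\alpha$ within its finite set. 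Should a stubborn extremal ray survive, one may contract it by a relative MMP over $T$ disjoint from $C$; smoothness and the plt structure are preserved in dimension two, and the argument proceeds on the resulting smooth dlt model.
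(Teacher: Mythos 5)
Your construction takes a genuinely different route from the paper, and unfortunately it does not work: the global convex interpolation fails to produce a divisor that is nef over $T$, and the fixes you suggest do not repair it. Your algebra is correct that, writing $G_C+G_F=\nu^*(B^c-B)=B^c_{\tilde S}-B_{\tilde S}$ for the decomposition into the part supported on $C$ and the part off $C$, one has
\[
-(K_{\tilde S}+B^*_{\tilde S}) \;=\; -\nu^*(K_S+B^c)+\alpha G_F \;\equiv_T\; \alpha G_F.
\]
But for any $g$-exceptional curve $D$ the term $-\nu^*(K_S+B^c)\cdot D$ vanishes exactly, so $-(K_{\tilde S}+B^*_{\tilde S})\cdot D=\alpha\,G_F\cdot D$ and the \emph{sign} of this intersection is completely independent of $\alpha$. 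Hence shrinking $\alpha$ cannot repair a negative intersection number; it only scales it. The required input is that $G_F$ itself be nef over $T$, and this is simply false in general. For a $g$-exceptional $D\subset\Supp F$ meeting $C$ one has $G_F\cdot D=(B^c-B)\cdot\nu_*D-G_C\cdot D$, and while the first term is $\ge 0$ ($B^c-B$ is $f$-nef), the second term $G_C\cdot D$ is strictly positive and there is no a priori lower bound on their difference. A transparent failure mode: if $-(K_S+B)\cdot\nu_*D=0$ (nef but not strictly positive on $\nu_*D$) and $B\cdot\nu_*D=0$, then $(B^c-B)\cdot\nu_*D=0$ while $G_C\cdot D\ge g_0>0$, so $G_F\cdot D<0$ for every choice of $\alpha$. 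Your closing suggestion of running a relative MMP "disjoint from $C$" cannot help either, because the problematic curves are precisely those meeting $C$, and contracting a $G_F$-negative curve is not a step of any $(K+\Delta)$-MMP.

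The reason the paper's proof avoids this is structural: rather than perturbing all off-$C$ coefficients proportionally, the paper finds a single chain $C_0=C,C_1,\dots,C_k$ from $C$ to a non-$g$-exceptional curve $C_k$, and decreases coefficients \emph{only} along $C_1,\dots,C_k$, replacing each $p/q$ by $(p-x)/(q-x)$ for a fixed small $x$ avoiding the gaps of $D(I)$. Because $B^c_{\tilde S}-B^*_{\tilde S}$ is then effective and supported solely on the chain, nefness against any $g$-exceptional $D$ outside the chain is automatic (distinct curves on a smooth surface meet nonnegatively), and nefness on $C_1,\dots,C_{k-1}$ together with positivity on $C_0$ is verified by an explicit adjunction-style computation using $(K_{\tilde S}+B^c_{\tilde S})\cdot C_j=0$. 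That local control is exactly what your global $G_F$-subtraction forfeits. If you want to salvage a convex-combination construction you would need an argument that $G_F$ is nef over $T$, and no such argument is available; the chain construction is not an optional stylistic choice but the mechanism that makes nefness provable.

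A secondary point: the finiteness of your proposed $J$ relies on the off-$C$ coefficients of $B_{\tilde S}$ lying in $I\cup\{0\}$, which is only true if every $\nu$-exceptional divisor lies in $C$. That holds if $\nu$ extracts only lc places, but then $\tilde S$ need not be smooth; if instead $\tilde S$ is a smooth model, $\nu$ may extract curves of coefficient $<1$ in $B^c_{\tilde S}$, whose coefficients in $B_{\tilde S}$ are $1-a(D,S,B)$ and can be negative, wrecking both the finiteness of $J$ and the claim $B^*_{\tilde S}\ge 0$. The paper's choice of $J$ (the set of values $p/q$ and $(p-x)/(q-x)$ for $q\le N$) sidesteps this because it is derived from the bounded complement index $N$ rather than from the coefficients of $B_{\tilde S}$.
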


We now prove how \autoref{claim_globallyFregular} implies  \autoref{t_globallyFregular}. We may write
$$(K_{\tilde S}+ B^*_{\tilde S})|_C=K_C+B^*_C.$$
In particular $(C,B^*_C)$ is log Fano.  By \autoref{cor. P1 case}, there exists $p_0$ depending only on $J$ (and therefore depending only on $I$) such that if the characteristic of the base field is greater than $p_0$ then $(C,B^*_C)$ is globally $F$-regular. Thus, \autoref{t_globallyFregular} follows immediately from \autoref{prop.GlobInvOfAdj}.

\medskip

We now proceed with the proof of \autoref{claim_globallyFregular}. The following Lemma is obvious:
\begin{lemma}\label{easy-lem}
Let $I\subseteq [0,1]$ be a finite set. Then  for any $\varepsilon>0$, the set $D(I)\cap (0,1-\varepsilon)$ is finite.
\end{lemma}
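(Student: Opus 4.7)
The plan is to reduce the finiteness of $D(I)\cap(0,1-\varepsilon)$ to two separate finiteness facts: one about the denominators $m$ that can appear in the expression $\frac{m-1+f}{m}$, and one about the numerators $f\in I_+$.

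First I would unwind the defining inequality. If $\frac{m-1+f}{m}<1-\varepsilon$ with $m\in\mathbb{N}$ and $f\ge 0$, then $m-1+f<m(1-\varepsilon)$, so $f<1-m\varepsilon$. In particular this forces $m<1/\varepsilon$, so only finitely many integers $m$ can occur.

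Next I would show that $I_+$ itself is finite when $I$ is finite. If $I\subseteq\{0\}$ this is trivial, so assume otherwise and set $\delta=\min(I\cap(0,1])>0$. Every $f\in I_+$ has the form $f=\sum_{j} a_j i_j$ with $i_j\in I$, $a_j\in\mathbb{N}$, and $f\le 1$; thus $\delta\sum_{i_j>0}a_j\le f\le 1$, so $\sum_{i_j>0}a_j\le 1/\delta$. Since $I$ is finite, only finitely many tuples of nonnegative integer coefficients $(a_j)$ satisfy this bound (coefficients attached to a zero element, if present, do not affect the sum), so $I_+$ is a finite set.

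Combining the two observations, the pairs $(m,f)\in\mathbb{N}\times I_+$ that can give rise to an element of $(0,1-\varepsilon)$ under $(m,f)\mapsto\frac{m-1+f}{m}$ lie in a finite set, hence so does the image $D(I)\cap(0,1-\varepsilon)$. There is no genuine obstacle here—the only thing to watch is the trivial case $I\subseteq\{0\}$—and indeed this lemma really is just a matter of unwinding \autoref{d_hyperstandard}, as the authors' phrasing ``obvious'' suggests.
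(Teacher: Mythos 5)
Your proof is correct; the paper offers no proof at all (the authors simply label the lemma ``obvious''), and your unwinding of \autoref{d_hyperstandard}—bounding $m$ by $1/\varepsilon$ from the inequality $f<1-m\varepsilon$, then bounding $I_+$ via $\sum_{i_j>0}a_j\le 1/\delta$—is exactly the natural argument one would supply. Both finiteness reductions and the trivial case $I\subseteq\{0\}$ are handled cleanly, so nothing is missing.
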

In particular, because our $I$ is finite, we apply Lemma \ref{easy-lem} for $\varepsilon=1/N$. Then there exists a sufficiently small rational number $x > 0$ such that
for any $a\in D(I)$ we have that
$$a\notin \Big(\frac {p-x}{q-x},\frac pq\Big)\qquad\text{for any }q=2,\dots,N,\quad p=1,\dots,q-1.$$
We use $J$ to denote
$$\Big\{\frac {p-x}{q-x}\mid q=2,\dots,N,\quad p=1,\dots,q-1\Big\}\cup \Big\{\frac {p}{q}\mid q=2,\dots,N,\quad p=1,\dots,q-1\Big\}.$$

Let $g=f\circ \nu\colon \tilde S\to T$.  Note that $C$ is $g$-exceptional. In addition, since $(S,B)$ is Kawamata log terminal and $-(K_S+B)$ is $f$-nef, it follows that $(T,B_T)$ is Kawamata log terminal. Thus, since $g$ factors through the minimal resolution of $S$, it follows that the graph associated to the exceptional divisor  of $g$ is a tree and its components are smooth rational curves which meet transversally with each other.  We now proceed similarly as in \cite[Lemma 3.3]{HaconXuThreeDimensionalMMP}. Let $D$ be the connected component of
$B_{\tilde S}^c-B_{\tilde S}$ which contains $C$. Assume by contradiction that $D$ is $g$-exceptional. Then
$$\begin{aligned}
D^2 &=(B_{\tilde S}^c - B_{\tilde S})\cdot D \\
&= (K_{\tilde S} + B_{\tilde S}^c)\cdot D  - (K_{\tilde S}+B_{\tilde S})\cdot D\\
&=-(K_S+B)\cdot \nu_*D\ge 0.
\end{aligned}
$$
which is a contradiction.
Thus, there exists a curve contained in the support of $D$ which is not $g$-exceptional.
Thus, there exists a chain of smooth rational curves in $\tilde S$
$$C_0=C,C_1,\dots,C_k$$ such that, $C_{i-1}\cdot C_{i}=1$ if $i=1,\dots,k-1$,
$C_0,\dots,C_{k-1}$ are $g$-exceptional, $C_k$ is not $g$-exceptional and $C_1,\dots,C_k$ are contained in the support of $B^c_{\tilde S}$. Let $q \in \{2,\dots,N\}$ such that $q(K_{\tilde S}+B^c_{\tilde S}) \sim_{\mathbb{Z},S} 0$.
Then the coefficients of $B^c_{\tilde S}-C$ are of the form $p/q$ for some $p\in \{1,\dots,q-1\}$.
We define $B^*_{\tilde S}$ by replacing, for each $i=1,\dots,k$, the coefficient in $B^c_{\tilde S} $ of the form $p/q$ by the
coefficient $(p-x)/(q-x)$, and for each other curve in the support of $B^c_{\tilde S}$ we do not change the coefficient. In particular,
we have that  $B^*_{\tilde S}\le B^c_{\tilde S}$, $\lfloor B^*_{\tilde S}\rfloor =C$ and the coefficients of  $B^*_{\tilde S}-C$ belong
to $J$.

It follows immediately that $-(K_{\tilde S}+B^*_{\tilde S})\cdot C<0$. Thus, $-(K_{\tilde S}+B'_{\tilde S})|_C$ is ample.

We now show that $-(K_{\tilde S}+B^*_{\tilde S})$ is nef over $T$. To this end, we use the same argument and the same notation as in \cite[Lemma 3.4]{HaconXuThreeDimensionalMMP}.
It is enough to show that $-(K_{\tilde S}+B^*_{\tilde S}) \cdot C_j\ge 0$ for any $j=1,\dots,k-1$.
  For any $j<k-1$, it holds that
$$\frac {p_{j-1} }q +\frac {p_{j+1}}q +\frac{p_j}{q} C^2_j+\frac r q -2 -C^2_j=0$$
 from that
 $$(K_{\tilde S}+B^c_{\tilde S}) \cdot C_j=0.
 $$

This implies that
$$p_{j-1}+p_{j+1}+r-2q +(p_j-q)C^2_j=0$$
which, in turn, implies that
$$\frac{p_{j-1}-x}{q-x} +\frac {p_{j+1}-x}{q-x} +\frac{p_j-x}{q-x} C^2_j+\frac r q -2 -C^2_j\le 0.$$
Thus, $-(K_{\tilde S}+B^c_{\tilde S})\cdot C_i\ge 0$ for any $j=1,\dots,k-2$.
A similar calculation yields  $-(K_{\tilde S}+B^c_{\tilde S})\cdot C_{k-1}< 0$ and the claim follows.

Finally, by the same proof as \cite[Lemma 3.5]{HaconXuThreeDimensionalMMP} and since by assumption the coefficients of $B$ are not contained within the interval $(\frac {p-x}{q-x},\frac p q)$ for any  coefficient $p/q$ of $B^c_{\tilde S}$, it follows that $B_{\tilde S}\le B^*_{\tilde S}$.
If $(\tilde S,B^c_{\tilde S})$ is purely log terminal, then $(\tilde S,B^*_{\tilde S})$ is also purely log terminal and therefore \autoref{claim_globallyFregular} follows.

\medskip
We now assume that $(\tilde S, B^c_{\tilde S})$ is not purely log terminal.
Let
$$J=\{\frac {p}{q}\mid q=2,\dots,N,\quad p=1,\dots,q-1\}.$$

Proceeding exactly as in \cite[pag. 19]{HaconXuThreeDimensionalMMP}, we can find a $\mathbb Q$-divisor $B^*_{\tilde S}\ge 0$ which satisfies the following properties:
\begin{enumerate}
\item[(1)] $(S,B^*_{\tilde S})$ is purely log terminal, and $C=\lfloor B^*_{\tilde S}\rfloor$ is a smooth rational curve,
\item[(2)] $B_{\tilde S} \le B^*_{\tilde S}\le B^c_{\tilde S}$,
\item[(3)] $-(K_{\tilde S}+B^*_{\tilde S})$ is nef over $T$,
\item[(4)] $-(K_{\tilde S}+ B^*_{\tilde S})|_C$ is ample, and
\item[(5)] if we write $(K_{\tilde S}+B^*_{\tilde S})|_C=K_C+\Theta$, there exists a divisor $\Theta'\ge \Theta$ on $C$ whose coefficients belong to $D(J)$ and such that $(C,\Theta')$ is log Fano.
\end{enumerate}
By \autoref{cor. P1 case}, it follows that if $p$ is sufficiently large, depending only on $J$ (and therefore depending only on $I$), then $(C,\Theta')$ is globally $F$-regular.
Thus, $(C,\Theta)$ is also globally $F$-regular, and the statement follows again by  \autoref{prop.GlobInvOfAdj}.

\bibliographystyle{skalpha}
\bibliography{MainBib}

\end{document}